\newtheorem{theorem}{Theorem}[section]
\newtheorem{proposition}[theorem]{Proposition}
\newtheorem{lemma}[theorem]{Lemma}
\newtheorem{corollary}[theorem]{Corollary}
\newtheorem{remark}[theorem]{Remark}
\newtheorem{notation}[theorem]{Notation}
\newcommand{\N}{\mathbb{N}}
\newcommand{\R}{\mathbb{R}}
\newcommand{\eps}{\varepsilon}
\title{Bounds on strong unicity for Chebyshev approximation with bounded coefficients}
\author{Andrei Sipo\c s${}^{a,b}$\\[2mm]
\footnotesize ${}^a$Research Center for Logic, Optimization and Security (LOS), Department of Computer Science,\\
\footnotesize Faculty of Mathematics and Computer Science, University of Bucharest,\\
\footnotesize Academiei 14, 010014 Bucharest, Romania\\[1mm]
\footnotesize ${}^b$Simion Stoilow Institute of Mathematics of the Romanian Academy,\\
\footnotesize Calea Grivi\c tei 21, 010702 Bucharest, Romania\\[2mm]
\footnotesize E-mail: andrei.sipos@fmi.unibuc.ro\\
}
\date{}
\begin{document}

\maketitle

\begin{abstract}
We obtain new effective results in best approximation theory, specifically moduli of uniqueness and constants of strong unicity, for the problem of best uniform approximation with bounded coefficients, as first considered by Roulier and Taylor. We make use of techniques from the field of proof mining, as introduced by Kohlenbach in the 1990s. In addition, some bounds are obtained via the Lagrangian interpolation formula as extended through the use of Schur polynomials to cover the case when certain coefficients are restricted to be zero.

\noindent {\em Mathematics Subject Classification 2010}: 41A10, 41A25, 41A50, 41A52.

\noindent {\em Keywords:} Chebyshev approximation, Schur polynomials, Young tableaux, modulus of uniqueness, strong uniqueness, proof mining.
\end{abstract}

\section{Introduction}\label{sec:intro}

A typical kind of result which comes up in approximation theory is the uniqueness of the best approximation of a function taken from a generally large class (such as the class of continuous or integrable functions) towards a reasonably well-behaved object such as a polynomial or a piecewise linear function. In that vein, one may cite the classical uniqueness theorem for uniform Chebyshev approximation. We shall denote in the sequel the supremum norm by $\|\cdot\|$ and -- for any $n \in \N$ -- the class of real polynomials of degree at most $n$ by $P_n$. Then, the theorem states that for any continuous $f: [0,1] \to \R$ and any $n \in \N$, there is a unique $p \in P_n$ such that
$$\|f-p\|=\min_{q \in P_n} \|f-q\|.$$

In his 1990 PhD thesis \cite{Koh90}, Ulrich Kohlenbach carried out a program of applying techniques from proof theory, a subfield of mathematical logic, to proofs of theorems such as the above in order to compute explicit so-called `moduli of uniqueness', i.e., roughly speaking, functions $\Psi$ such that for any $f$ and $n$ as in the above, any $\eps > 0$ and any $p_1$, $p_2 \in P_n$ that have the corresponding `approximation errors' $\|f-p_1\|$ and $\|f-p_2\|$ within $\Psi(f,n,\eps)$ of the desired minimum, one can then be sure that $\|p_1 -p_2\| \leq \eps$. Such a modulus would help, for example, in calibrating the number of steps an algorithm should be run in order to obtain a polynomial as close as desired to the optimal one. The rationale for applying proof-theoretic ideas to these kinds of problems arose from a program of Georg Kreisel from the 1950s called `unwinding of proofs', which aimed at using proof transformations in order to extract new information out of potentially non-constructive proofs in ordinary mathematics. This program was later given maturity by Kohlenbach and his students and collaborators, under the name of `proof mining', and yielded several new results not only in approximation theory, but also in fields such as nonlinear analysis, ergodic theory, convex optimization or commutative algebra. A comprehensive monograph which reflects the state of the art of the field as of 2008 is \cite{Koh08}, while more recent surveys are \cite{Koh17,Koh19}.

Kohlenbach analysed initially two proofs of the uniqueness of the best Chebyshev approximation, the standard one of de la Vall\'ee Poussin \cite{VP19} and a lesser known one due to Young \cite{You07}. The latter one, although conceptually more involved, has the advantage that its analysis is much simpler due to the fact that the result which is the essential ingredient in both proofs, the alternation theorem, is used in such a way that its proof may be effectively bypassed. These two analyses were published in \cite{Koh93a, Koh93b}. Later, in a paper from 2003 \cite{KohOli03}, Kohlenbach and his student Paulo Oliva also obtained moduli of uniqueness for a result whose potential had been foreshadowed in \cite{Koh90,Koh93b}, namely best polynomial approximation with respect to the $L_1$ norm. Detailed expositions of this work may be found in \cite{Koh96} and in \cite[Chapter 16]{Koh08}, and we shall frequently reference the latter of those in the course of presenting our results.

Another case study which had been mentioned in \cite[Chapter 8]{Koh90} as a promising avenue for future research is the uniqueness of the best Chebyshev (uniform) approximation by polynomials of bounded degree with some constraints on the coefficients. This was first established in 1971 by the following result of Roulier and Taylor \cite{RouTay71}.

\begin{theorem}[{cf. \cite[Theorem 5]{RouTay71}}]\label{rt-thm}
Let $n$, $m \in \N$ be such that $m \leq n$ and $(k_i)_{i=1}^m \subseteq \N$ be such that $0 < k_1 < \ldots < k_m \leq n$. In addition, let $(a_i)_{i=1}^m$, $(b_i)_{i=1}^m \subseteq \R \cup \{\pm\infty\}$ be such that for all $i \in \{1,\ldots,m\}$, $a_i \leq b_i$, $a_i \neq \infty$ and $b_i \neq -\infty$. If one sets
$$K:=\left\{ \sum_{i=0}^n c_iX^i \in P_n \mid \ \text{for all }i \in \{1,\ldots,m\},\ a_i \leq c_{k_i} \leq b_i\right\},$$
then for any continuous $f: [0,1] \to \R$ there is a unique $p \in K$ such that
$$\|f-p\|=\min_{q \in K} \|f-q\|.$$
\end{theorem}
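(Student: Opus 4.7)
The plan is to adapt the classical Chebyshev uniqueness argument to the constrained setting, with Descartes' rule of signs taking over the role that the Haar (unisolvence) property plays in the unconstrained case. Existence is routine: $K$ is closed and convex as the intersection of $P_n$ with finitely many closed half-spaces, and any minimizing sequence $(p_j) \subseteq K$ satisfies $\|p_j\| \leq \|f\| + \|f - p_j\|$ and is therefore eventually bounded, so by finite-dimensionality of $P_n$ a subsequence converges to a limit that lies in $K$ and realizes $d := \inf_{q \in K} \|f - q\|$.

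For uniqueness, assume $p_1, p_2 \in K$ both achieve $d$; the case $d = 0$ forces $p_1 = p_2 = f$, so we may assume $d > 0$. By convexity $p := (p_1 + p_2)/2 \in K$ is also a best approximation. At each extremal point $x^* \in E(p) := \{x \in [0,1] : |f(x) - p(x)| = d\}$, the two residuals $f(x^*) - p_i(x^*)$ have absolute values at most $d$ and average $\pm d$, forcing them to be equal and hence $p_1(x^*) = p_2(x^*)$; the analogous averaging applied to the coefficient inequalities forces $(p_1)_{k_i} = (p_2)_{k_i}$ whenever $(p)_{k_i} \in \{a_i, b_i\}$. Writing $A := \{k_i : (p)_{k_i} \in \{a_i, b_i\}\}$, $N := \{0, \ldots, n\} \setminus A$ and $V := \mathrm{span}\{X^j : j \in N\}$, the difference $q := p_1 - p_2$ lies in $V$ and vanishes on all of $E(p)$.

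The finishing step combines two ingredients. Descartes' rule of signs caps the number of zeros of any nonzero element of $V$ in $(0, \infty)$ at $|N| - 1$, hence at most $|N|$ zeros in $[0,1]$ (the extra accounting for a possible root at $0$). It therefore suffices to produce $|N| + 1$ distinct points in $E(p)$, which is the main technical obstacle. My approach is a Kolmogorov-criterion contradiction: the feasible-direction cone of $K$ at $p$ contains $\pm V$ by construction of $N$, so for every $v \in V$ the inequality $\max_{x \in E(p)} -\mathrm{sign}(f(x) - p(x))\, v(x) \geq 0$ must hold. Conversely, if $|E(p) \cap (0, 1]| \leq |N|$, then the Chebyshev (Vandermonde) property of the monomial family $\{X^j\}_{j \in N}$ on the positive reals lets us interpolate a $v \in V$ with $v(x) = \mathrm{sign}(f(x) - p(x))$ at every $x \in E(p) \cap (0,1]$, producing a strict descent direction and contradicting optimality of $p$; a small separate check, tracking whether $0 \in N$, handles the endpoint $x = 0$. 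Hence $q$ has at least $|N| + 1$ zeros in $[0,1]$ versus the Descartes cap of $|N|$, forcing $q = 0$ and so $p_1 = p_2$.
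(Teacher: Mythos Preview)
Your argument is correct. A small bookkeeping remark: since the constraints never touch the constant term ($k_1>0$), one always has $0\in N$, and then the monomial family $\{X^j:j\in N\}$ is actually a Chebyshev system on all of $[0,1]$, not merely on $(0,1]$. Indeed, if a nonzero $q\in V$ vanishes at $0$ its constant coefficient is zero, so it has at most $|N|-1$ nonzero terms and hence at most $|N|-2$ positive roots; together with the root at $0$ this still gives at most $|N|-1$ zeros in $[0,1]$. So your ``separate check'' for $x=0$ is unnecessary and your zero bound can be sharpened from $|N|$ to $|N|-1$, though your version already suffices.

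Your route, however, differs substantially from the paper's. You deduce uniqueness directly from the Kolmogorov characterisation together with a Descartes/Chebyshev-system zero count; this is essentially the classical line (close to Roulier--Taylor's original argument, which likewise rests on the unisolvence expressed by Theorem~\ref{rt-th2}). The paper instead recovers Theorem~\ref{rt-thm} only as a byproduct of the quantitative Theorem~\ref{modulus}: it first proves an \emph{approximate} alternation theorem (Theorem~\ref{alt-mess}) by constructing an explicit perturbing polynomial via Schur polynomials, and then uses the Schur-based interpolation formula~\eqref{schf} with the bounds of Propositions~\ref{s-ub}--\ref{s-lb2} to control $\|p_1-p_2\|$. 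What your approach buys is brevity and conceptual clarity for the bare uniqueness statement; what the paper's approach buys is precisely what yours cannot give --- an explicit, computable modulus of uniqueness (and hence a constant of strong unicity), which is the point of the proof-mining programme.
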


The goal of this paper is to obtain a modulus of uniqueness for this case of Chebyshev approximation. The main novelty is the application of Schur polynomials to obtain useful explicit formulas for the interpolation results which are needed in the proof. These formulas, together with some ways they can be bounded, are presented in Section~\ref{sec:schur}. Afterwards, in Section~\ref{sec:main}, we present the actual derivation of our modulus of uniqueness, which is followed in Section~\ref{sec:rem} by its immediate byproducts: the associated constant of strong unicity and a modulus that does not depend on a lower bound on the distance to the best approximant.

We shall use the convention $0^0=1$.

\section{Interpolation with Schur polynomials}\label{sec:schur}

A tool which is used in ordinary Chebyshev approximation and which was employed in its proof analyses is the Lagrangian interpolation formula, which we shall now review. Let $n \in \N$ and consider $n+1$ distinct points $x_1,\ldots,x_{n+1} \in [0,1]$ and $p \in P_n$. If one puts, for any $j \in \{1,\ldots,n+1\}$ and any $x \in [0,1]$,
$$l_j(x;x_1,\ldots,x_{n+1}):=\prod_{i \neq j} \frac{ x-x_i}{x_j-x_i},$$
then, for any $x \in [0,1]$,
$$p(x)=\sum_{j=1}^{n+1} l_j(x;x_1,\ldots,x_{n+1})\cdot p(x_j).$$

In their proof of Theorem~\ref{rt-thm}, Roulier and Taylor used some general interpolation results that yield polynomials where some of the coefficients are constrained to be zero -- specifically, the following two results.

\begin{theorem}[{cf. \cite[Theorem 2]{RouTay71}}]\label{rt-th2}
Let $n$, $l \in \N$ be such that $l \leq n$ and $(g_i)_{i=1}^l \subseteq \N$ be such that $0 < g_1 < \ldots < g_l \leq n$. In addition, consider $(x_j)_{j=1}^{n+1-l} \subseteq [0,1]$ such that $x_1<\ldots<x_{n+1-l}$ and let $(\alpha_j)_{j=1}^{n+1-l} \subseteq \R$. Then there exists a unique $p = \sum_{i=0}^n c_iX^i \in P_n$ such that:
\begin{itemize}
\item for all $i \in \{1,\ldots,l\}$, $c_{g_i}=0$;
\item for all $j \in \{1,\ldots,n+1-l\}$, $p(x_j)=\alpha_j$.
\end{itemize}
\end{theorem}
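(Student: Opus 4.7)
The natural first step is to recast the problem as a square linear system. Let $(e_k)_{k=0}^{n-l}$ be the increasing enumeration of $\{0,1,\ldots,n\} \setminus \{g_1,\ldots,g_l\}$; since $g_1 \geq 1$, we have $e_0 = 0$. The requirement that $c_{g_i} = 0$ for all $i$ is equivalent to saying $p = \sum_{k=0}^{n-l} c_{e_k} X^{e_k}$, and the interpolation conditions then read
$$\sum_{k=0}^{n-l} c_{e_k} x_j^{e_k} = \alpha_j, \qquad j \in \{1,\ldots,n+1-l\}.$$
This is an $(n+1-l) \times (n+1-l)$ linear system with matrix $M = (x_j^{e_k})_{j,k}$, and both existence and uniqueness of $p$ follow once $M$ is shown to be invertible.

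To establish $\det M \neq 0$ I would apply the classical identity expressing such a generalized Vandermonde determinant as a Schur polynomial times the ordinary Vandermonde: writing each exponent in the form $e_k = \lambda_{n+1-l-k} + k$ for a unique partition $\lambda$, one has
$$\det M = s_\lambda(x_1,\ldots,x_{n+1-l}) \cdot \prod_{1 \leq i < j \leq n+1-l} (x_j - x_i).$$
The Vandermonde factor is non-zero because the $x_j$ are distinct. The Schur polynomial $s_\lambda$ has non-negative integer coefficients and is not identically zero, so it is strictly positive whenever all arguments are positive. The only delicate sub-case is when $0$ lies among the $x_j$ (necessarily $x_1$); there one verifies directly, via the semistandard tableau expansion of $s_\lambda$, that at least one surviving monomial is non-zero, using precisely that $e_0 = 0$ forces the last part of $\lambda$ to vanish, so tableaux avoiding the entry $1$ still contribute.

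An alternative, Schur-free route uses Descartes' rule of signs on the homogeneous system: a non-zero polynomial with support in $(e_k)$ has at most $n+1-l$ non-zero coefficients and therefore at most $n-l$ positive real roots; if in addition $0$ is a root, then $c_0 = 0$ and the bound drops further to $n-l-1$ positive roots, so together with the root at $0$ one still has at most $n-l$ distinct roots in $[0,1]$. Either way this contradicts the assumption that all $n+1-l$ distinct points $x_j$ are roots. I expect the main obstacle, in both approaches, to be the careful bookkeeping at the endpoint $x_1 = 0$, which otherwise threatens an off-by-one; given the Schur-polynomial theme of this section, the first route seems preferable for continuity with the material that follows.
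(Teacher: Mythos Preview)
The paper does \emph{not} prove Theorem~\ref{rt-th2}: it is quoted from Roulier--Taylor and, a few lines later, the text says explicitly ``For that, we presuppose the truth of Theorem~\ref{rt-th2}.'' So there is no in-paper proof to compare against.

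That said, your Schur-based argument is correct and in fact coincides almost verbatim with the machinery the paper develops immediately afterwards for a different purpose (deriving the explicit interpolation formula): the paper writes the generalized Vandermonde determinant as $V(x_1,\ldots,x_{r+1})\cdot s_{\lambda^d}(x_1,\ldots,x_{r+1})$ and then, in Proposition~\ref{s-lb1}, shows the Schur factor is strictly positive under the hypothesis that at most one argument is below some $\delta$ --- which is exactly your handling of the case $x_1=0$, via a tableau that avoids the entry~$1$ (possible precisely because $e_0=0$ forces $\lambda^h_{r+1}=0$). So your first route is not merely ``in the spirit of'' the section; it \emph{is} the section's content, repurposed to yield existence and uniqueness rather than being assumed.

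Your Descartes' rule alternative is also correct and is essentially the original Roulier--Taylor argument the paper is citing. The endpoint bookkeeping you flag is handled correctly in your sketch: if $x_1=0$ then $c_{e_0}=c_0=0$, the number of nonzero terms drops by one, and the count of positive roots plus the root at~$0$ still cannot reach $n+1-l$.
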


\begin{proposition}[{cf. \cite[Corollary 1]{RouTay71}}]\label{rt-cor1}
Let $n$, $l \in \N$ be such that $l \leq n$ and $(g_i)_{i=1}^l \subseteq \N$ be such that $0 < g_1 < \ldots < g_l \leq n$. In addition, consider $(x_j)_{j=1}^{n-l} \subseteq (0,1)$ such that $x_1<\ldots<x_{n-l}$. Then there exists a $p = \sum_{i=0}^n c_iX^i \in P_n$ such that:
\begin{itemize}
\item for all $i \in \{1,\ldots,l\}$, $c_{g_i}=0$;
\item for all $j \in \{1,\ldots,n-l\}$, $p(x_j)=0$;
\item setting $x_0:=0$ and $x_{n+1-l}:=1$, for all $j \in \{0,\ldots,n-l\}$, $p$ is nonzero on the interval $(x_j,x_{j+1})$ with sign $(-1)^{j}$.
\end{itemize}
\end{proposition}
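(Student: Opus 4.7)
The plan is to construct $p$ by invoking Theorem~\ref{rt-th2} with one extra interpolation point (beyond the prescribed zeros $x_1,\ldots,x_{n-l}$), then use its uniqueness clause to rule out extraneous zeros in $(0,1)$, and finally pin down the sign on each subinterval via a Schur polynomial factorization of the generalized Vandermonde determinant that represents $p$.

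For the construction, I would pick an auxiliary point $x^* \in [0,1] \setminus \{x_1, \ldots, x_{n-l}\}$---for instance $x^* := 0$, since each $x_j \in (0,1)$---and apply Theorem~\ref{rt-th2} to the $n+1-l$ distinct points $x^*, x_1, \ldots, x_{n-l}$ with target values $1, 0, \ldots, 0$ under the same coefficient constraints. This yields $p \in P_n$ with $c_{g_i}=0$ for all $i$, $p(x_j)=0$ for each $j$, and $p(0)=1$, so in particular $p \not\equiv 0$. To see that $p$ has no zero in $(0,1)\setminus\{x_1,\ldots,x_{n-l}\}$, I would argue by contradiction: any such extra zero $y$ would provide $n+1-l$ distinct points at which $p$ vanishes, and the uniqueness part of Theorem~\ref{rt-th2} (applied to $y,x_1,\ldots,x_{n-l}$ with all zero values) would force $p\equiv 0$, against $p(0)=1$. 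Hence $p$ has constant nonzero sign on each open interval $(x_j,x_{j+1})$.

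For the actual sign pattern, I would use an explicit determinantal representation of $p$. Writing $\{0,\ldots,n\}\setminus\{g_1,\ldots,g_l\} =: \{k_0<k_1<\ldots<k_{n-l}\}$, the polynomial $p$ has the form $\sum_{t=0}^{n-l} a_t X^{k_t}$, and Cramer's rule applied to the homogeneous system $p(x_i)=0$ (whose solution space is one-dimensional) shows that, up to a nonzero scalar,
$$p(y) \;=\; \det\bigl((z_r^{k_t})_{r,t=0}^{n-l}\bigr), \qquad (z_0,z_1,\ldots,z_{n-l}) := (y,x_1,\ldots,x_{n-l}).$$
By the Jacobi bi-alternant identity this generalized Vandermonde factors as $s_\lambda(y,x_1,\ldots,x_{n-l})\cdot\prod_{0\leq r<s\leq n-l}(z_s-z_r)$ for the partition $\lambda$ read off the exponents $(k_t)$. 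The Vandermonde product has sign $(-1)^j$ on $(x_j,x_{j+1})$ (exactly $j$ of the differences $x_i-y$ are negative), while $s_\lambda$ is strictly positive on positive arguments, as its expansion over semistandard Young tableaux has only non-negative coefficients and admits at least one tableau whenever $\lambda$ has no more parts than variables. Thus $p(y)$ has sign $(-1)^j$ on each $(x_j,x_{j+1})$, the global sign being fixed to $+1$ by $p(0)=1$.

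The main obstacle is the Schur/Vandermonde factorization of the generalized Vandermonde determinant together with the strict positivity of $s_\lambda$; these are classical identities that Section~\ref{sec:schur} proceeds to develop right after this statement, and once they are in hand the sign analysis reduces to elementary observations about the Vandermonde sign and the monomial positivity of Schur polynomials.
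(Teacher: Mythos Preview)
Your proposal is correct and follows essentially the same route as the paper: derive the polynomial from Theorem~\ref{rt-th2} with one extra interpolation node, then read off the sign pattern from the Schur/Vandermonde factorization of the resulting generalized Vandermonde determinant. The only noteworthy differences are cosmetic. The paper takes the extra node to be $1$ rather than $0$, and then (after choosing the free value suitably) writes the interpolant directly as
\[
p=\prod_{i=1}^{r}(x_i-X)\cdot s_{\lambda^d}(X,x_1,\ldots,x_r),
\]
invoking Proposition~\ref{s-lb1} to certify that the Schur factor is strictly positive on $[0,1]$; the alternation $(-1)^j$ then comes straight from the product $\prod(x_i-X)$. In particular, the paper does not need your intermediate ``no extraneous zeros'' argument via the uniqueness clause of Theorem~\ref{rt-th2}: once the explicit factorization is on the table, nonvanishing on each $(x_j,x_{j+1})$ is immediate. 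Your version is slightly longer but equally valid; the only thing to tidy is that your global-sign step (``fixed to $+1$ by $p(0)=1$'') implicitly uses continuity of $p$ at $0$ together with the already-established nonvanishing on $(0,x_1)$ to conclude that the unknown scalar in $p=c\cdot q$ is positive---worth saying in one line.
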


What we want is to derive a useful explicit formula for the interpolation polynomial. For that, we presuppose the truth of Theorem~\ref{rt-th2}. Assume that we have $n$, $r \in \N$ with $r \leq n$, $(d_i)_{i=1}^{r+1} \subseteq \N$ with $n \geq d_1 > d_2 > \ldots > d_{r+1}$, $(x_j)_{j=1}^{r+1} \subseteq (0,1)$ with $x_1<\ldots<x_{r+1}$ and $(\alpha_j)_{j=1}^{r+1} \subseteq \R$. Suppose that we already have a polynomial
$$p = \sum_{i=1}^{r+1} \eta_i X^{d_i}$$
such that for all $j \in \{1,\ldots,r+1\}$,
$$p(x_j)=\alpha_j,$$
so, for all $j \in \{1,\ldots,r+1\}$,
$$\sum_{i=1}^{r+1} \eta_i x_j^{d_i} = \alpha_j.$$
Therefore one has that
$$
\begin{pmatrix}
p \\
\alpha_1 \\
\vdots \\
\alpha_{r+1}
\end{pmatrix}
=\sum_{i=1}^{r+1} \eta_i
\begin{pmatrix}
X^{d_i} \\
x_1^{d_i} \\
\vdots \\
x_{r+1}^{d_i}
\end{pmatrix},
$$
so
\begin{equation}\label{p-form}
\begin{vmatrix}
p & X^{d_1} & \cdots & X^{d_{r+1}} \\
\alpha_1 & x_1^{d_1} & \cdots & x_1^{d_{r+1}} \\
\vdots & \vdots & \ddots & \vdots \\
\alpha_{r+1} & x_{r+1}^{d_1} & \cdots & x_{r+1}^{d_{r+1}}
\end{vmatrix}
= 0.
\end{equation}

The form of the matrix above resembles a bit the one of Vandermonde determinants -- while the ordinary Vandermonde determinant, for any $r \in \N$ and any $y_1,\ldots,y_{r+1}$, is given by 
$$
\begin{vmatrix}
y_1^r & y_1^{r-1} & \cdots & 1 \\
y_2^r & y_2^{r-1} & \cdots & 1 \\
\vdots & \vdots & \ddots & \vdots \\
y_{r+1}^r & y_{r+1}^{r-1} & \cdots & 1
\end{vmatrix}
= \prod_{1\leq i<j\leq {r+1}} (y_i-y_j),$$
and we shall denote it by $V(y_1,\ldots,y_{r+1})$, the arbitrariness of the degrees in the matrix of \eqref{p-form} leads one to use the notion of a {\it generalized} Vandermonde determinant, for which one considers in addition a finite sequence $(h_i)_{i=1}^{r+1} \subseteq \N$ with $h_1 > \ldots > h_{r+1}$ and then sets
$$
V(h_1,\ldots,h_{r+1};y_1,\ldots,y_{r+1}):=
\begin{vmatrix}
y_1^{h_1} & y_1^{h_2} & \cdots & y_1^{h_{r+1}} \\
y_2^{h_1} & y_2^{h_2} & \cdots & y_2^{h_{r+1}} \\
\vdots & \vdots & \ddots & \vdots \\
y_{r+1}^{h_1} & y_{r+1}^{h_2} & \cdots & y_{r+1}^{h_{r+1}}
\end{vmatrix}.
$$
Armed with these notations, by expanding the determinant in \eqref{p-form} along its first column, we get that
$$p = \sum_{j=1}^{r+1} (-1)^{j-1} \frac{V(d_1,\ldots,d_{r+1};X,x_1,\ldots,\widehat{x_j},\ldots,x_{r+1})}{V(d_1,\ldots,d_{r+1};x_1,\ldots,x_{r+1})} \cdot\alpha_j.$$

In order to obtain a workable formula for $p$, we shall make use of some concepts and results of algebraic combinatorics. The standard reference for these notions is \cite[Part I]{Mac95}. By a {\it partition}, in the following, we shall mean a finite sequence $(\lambda_i)_{i=1}^{r+1} \subseteq \N$ with $\lambda_1 \geq \ldots \geq \lambda_{r+1}$.  To any finite sequence $h = (h_i)_{i=1}^{r+1} \subseteq \N$ with $h_1 > \ldots > h_{r+1}$ as before, one associates a partition $\lambda^h$ by putting, for any $i \in \{1,\ldots, {r+1}\}$, $\lambda^h_i:=h_i+i-r-1$. (It is easy to check that this correspondence is actually bijective.) To any partition one can in turn associate a multivariate polynomial by the following procedure. If $r \in \N$ and $\lambda$ is a partition of length $r+1$, then a {\it semistandard Young tableau} of weight $\lambda$ is a jagged array with $r+1$ rows where for any $i \in \{1,\ldots,{r+1}\}$, the $i$'th line has $\lambda_i$ entries which are elements of the set $\{1,\ldots,{r+1}\}$, such that the entries on each row are (weakly) increasing and the entries on each column are strictly increasing. If $T$ is such a semistandard Young tableau in which for each $i \in \{1,\ldots,{r+1}\}$, $i$ appears $t_i$ times in $T$, one denotes by $y^T$ the monomial $y_1^{t_1}\ldots y_{r+1}^{t_{r+1}}$. Then the {\it Schur polynomial} associated to $\lambda$ is defined by
$$s_\lambda:= \sum_{T} y^T,$$
where $T$ ranges over all semistandard Young tableaux of weight $\lambda$. One may easily show that this polynomial is symmetric.

The relevant result here (a simple proof may be found in \cite{Pro89}) states that for any $r$, any strictly decreasing sequence $h$ of length ${r+1}$ and any $y_1,\ldots,y_{r+1}$,
$$V(h_1,\ldots,h_{r+1};y_1,\ldots,y_{r+1}) = V(y_1,\ldots,y_{r+1}) \cdot s_{\lambda^h}(y_1,\ldots,y_{r+1}).$$

The formula above for $p$ now becomes
$$p = \sum_{j=1}^{r+1} (-1)^{j-1} \frac{V(X,x_1,\ldots,\widehat{x_j},\ldots,x_{r+1}) \cdot s_{\lambda^d}(X,x_1,\ldots,\widehat{x_j},\ldots,x_{r+1})}{V(x_1,\ldots,x_{r+1}) \cdot s_{\lambda^d}(x_1,\ldots,x_{r+1})} \cdot\alpha_j.$$

Since, for any $j$,
$$(-1)^{j-1} \frac{V(X,x_1,\ldots,\widehat{x_j},\ldots,x_{r+1})}{V(x_1,\ldots,x_{r+1})} = l_j(X;x_1,\ldots,x_{r+1}),$$
we have that
\begin{equation}\label{schf}
p = \sum_{j=1}^{r+1} l_j(X;x_1,\ldots,x_{r+1}) \cdot\alpha_j \cdot \frac{s_{\lambda^d}(X,x_1,\ldots,\widehat{x_j},\ldots,x_{r+1})}{s_{\lambda^d}(x_1,\ldots,x_{r+1})},
\end{equation}
a formula that differs from the Lagrangian one only by the additional Schur factors.

For any partition $\lambda$ of length ${r+1}$, the number of semistandard Young tableaux of weight $\lambda$ can be shown to be
$$N_\lambda:=\prod_{1\leq i<j\leq {r+1}} \frac{\lambda_i-\lambda_j+j-i}{j-i}.$$
Moreover, for any $n$ there is a finite number of strictly decreasing sequences $h$ with length smaller than or equal to $n+1$ and with $h_1 \leq n$. If we set, for any $n$, $N_n$ to be the maximum of all the $N_{\lambda^h}$'s for all these $h$'s, this number is easily seen to be computable. The following bound is now immediate.

\begin{proposition}\label{s-ub}
For all $n$, $r \in \N$ with $r \leq n$, any strictly decreasing sequence $h$ of length ${r+1}$ and with $h_1\leq n$, and any $y_1,\ldots,y_{r+1} \in [0,1]$,
$$0 \leq s_{\lambda^h}(y_1,\ldots,y_{r+1}) \leq N_n.$$
\end{proposition}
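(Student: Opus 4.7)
The plan is to read the bound directly off the combinatorial definition of the Schur polynomial, without any manipulation of the determinantal formula. Recall that by definition
$$s_{\lambda^h}(y_1,\ldots,y_{r+1}) = \sum_T y^T,$$
where $T$ ranges over all semistandard Young tableaux of weight $\lambda^h$ and $y^T = y_1^{t_1}\cdots y_{r+1}^{t_{r+1}}$ for the multiplicity vector $(t_i)$ determined by $T$.

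First, I would observe that for $y_1,\ldots,y_{r+1} \in [0,1]$, each monomial $y^T$ is a product of nonnegative factors, each lying in $[0,1]$, so $y^T \in [0,1]$. Summing nonnegative terms yields the lower bound $s_{\lambda^h}(y_1,\ldots,y_{r+1}) \geq 0$ immediately. For the upper bound, I would bound each term $y^T$ by $1$ and conclude that the value is at most the number of summands, which is precisely $N_{\lambda^h}$, i.e.
$$s_{\lambda^h}(y_1,\ldots,y_{r+1}) \leq N_{\lambda^h}.$$

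Finally, to obtain a bound depending only on $n$, I would invoke the definition of $N_n$: since $h$ is a strictly decreasing sequence of length $r+1 \leq n+1$ with $h_1 \leq n$, the partition $\lambda^h$ is one of those considered in the maximum, and hence $N_{\lambda^h} \leq N_n$. Combining these gives the claimed chain $0 \leq s_{\lambda^h}(y_1,\ldots,y_{r+1}) \leq N_n$. There is no real obstacle here; the only thing to double-check is that the convention $0^0 = 1$ (adopted in Section~\ref{sec:intro}) keeps the monomial bound $y^T \leq 1$ valid at the endpoints $y_i = 0$, which it does.
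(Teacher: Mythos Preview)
Your argument is correct and is exactly the ``immediate'' verification the paper has in mind: bound each monomial $y^T$ in $[0,1]$, sum over the $N_{\lambda^h}$ tableaux, and then use $N_{\lambda^h}\leq N_n$ by definition of $N_n$. Nothing more is needed.
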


In order to obtain meaningful (i.e. nonzero) {\it lower} bounds on the Schur polynomials, we must capitalize on the hypotheses of our problem. Theorem~\ref{rt-th2} above only concerns cases where the degrees of the coefficients which are required to be zero -- the $g_i$'s -- are nonzero, so the degrees of the coefficients which form the parameters of our problem -- the $d_i$'s -- contain $0$, and since the sequence $(d_i)$ is decreasing, we have that $d_{r+1}=0$. Therefore it makes sense to focus on this kind of strictly decreasing sequences.

\begin{proposition}\label{s-lb1}
Let $n$, $r \in \N$ with $r \leq n$, and let $h$ be a strictly decreasing sequence of length ${r+1}$ with $h_1 \leq n$ and $h_{r+1}=0$. Let $\delta > 0$ and $y_1,\ldots,y_{r+1} \in [0,1]$ be such that at most one of the $y_i$'s is strictly smaller than $\delta$. Then
$$s_{\lambda^h}(y_1,\ldots,y_{r+1}) \geq \delta^{\frac{n^2}4}.$$
\end{proposition}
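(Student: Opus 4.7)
The plan is to exhibit a single semistandard Young tableau whose monomial already witnesses the bound: since $y_i\in[0,1]$, every term in $s_{\lambda^h}$ is nonnegative, so one sufficiently large term suffices.

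The symmetry of $s_{\lambda^h}$ noted in the paper lets me permute the arguments; by hypothesis at least $r$ of the $y_i$'s are $\geq\delta$, so without loss of generality $y_1,\ldots,y_r\geq\delta$ and $y_{r+1}$ plays the role of the possibly small coordinate. Consider then the canonical tableau $T_0$ of shape $\lambda^h$ obtained by filling the $i$-th row entirely with the entry $i$: each row is constant and each column reads $1,2,3,\ldots$, hence strictly increasing, so $T_0$ is a valid semistandard Young tableau. Its monomial is $y^{T_0}=y_1^{\lambda^h_1}\cdots y_{r+1}^{\lambda^h_{r+1}}$, and the hypothesis $h_{r+1}=0$ forces $\lambda^h_{r+1}=h_{r+1}+(r+1)-(r+1)=0$, so the factor involving $y_{r+1}$ reduces to $1$. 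Assuming $\delta\leq 1$ (the only nonvacuous range given $y_i\in[0,1]$), this yields $y^{T_0}\geq \delta^{\lambda^h_1+\cdots+\lambda^h_r}=\delta^{|\lambda^h|}$, and hence $s_{\lambda^h}(y_1,\ldots,y_{r+1})\geq \delta^{|\lambda^h|}$.

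It then suffices to estimate $|\lambda^h|$. Using $\lambda^h_i=h_i+i-r-1$, a direct computation gives $|\lambda^h|=\sum_{i=1}^{r+1}h_i - r(r+1)/2$. Since $h$ is strictly decreasing with integer values, $h_{r+1}=0$ and $h_1\leq n$, one obtains the crude bound $h_i\leq n+1-i$ for $i\in\{1,\ldots,r\}$, so $\sum_{i=1}^{r+1}h_i\leq rn-r(r-1)/2$, whence $|\lambda^h|\leq r(n-r)$. The AM--GM inequality then yields $r(n-r)\leq n^2/4$, so $s_{\lambda^h}(y_1,\ldots,y_{r+1})\geq\delta^{n^2/4}$, as claimed.

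I do not foresee a real obstacle here; the only insight beyond routine manipulation is recognizing that the assumption $h_{r+1}=0$ is precisely what allows the canonical tableau to avoid paying any power of the one variable that might fail to be $\geq\delta$, after which the bound on $|\lambda^h|$ is an elementary counting estimate.
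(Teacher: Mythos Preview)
Your argument is correct and follows essentially the same route as the paper: reduce by symmetry so that only one variable can be small, exhibit a single semistandard tableau that avoids that variable (possible precisely because $\lambda^h_{r+1}=0$), and then bound the total exponent by $r(n-r)\leq n^2/4$. The only cosmetic differences are that the paper places the small variable first and uses the tableau with entry $i+1$ in row $i$, whereas you place it last and use the canonical tableau with entry $i$ in row $i$; and the paper bounds $\sum_i \lambda^h_i$ more directly via $\sum_i \lambda^h_i \leq r\lambda^h_1 = r(h_1-r)$ rather than through $\sum_i h_i$.
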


\begin{proof}
Since $s_{\lambda^h}$ is symmetric, we may assume that for all $j \in \{2,\ldots,{r+1}\}$, $y_j \geq \delta$.

Also, by the definition of $\lambda^h$, we have that $\lambda^h_{r+1}=0$. Therefore, we may construct a semistandard Young tableau of weight $\lambda^h$ in the following way: one fills, for each $i \in \{1,\ldots,r\}$, each entry in the $i$'th row with the number $i+1$. Since all $y_i$'s are nonnegative, the monomials associated with the other possible tableaux will be nonnegative, and therefore one obtains the lower bound
$$s_{\lambda^h}(y_1,\ldots,y_{r+1}) \geq y_2^{\lambda^h_1}\ldots y_{r+1}^{\lambda^h_{r}} \geq \delta^{\sum_{i=1}^{r} \lambda^h_i}.$$
On the other hand, one has
$$\sum_{i=1}^{r} \lambda^h_i \leq r \cdot \lambda^h_1 = r(h_1 - r) \leq r(n-r) \leq \frac{n^2}4,$$
which finishes our proof.
\end{proof}

In particular, the proposition above guarantees that the formula for $p$ is valid i.e. the Schur denominator is nonzero (as one can take, for example, $\delta$ such that $x_1<\delta<x_2$).

We shall also need the following particular kind of lower bound.

\begin{proposition}\label{s-lb2}
Let $n$, $r \in \N$ with $r \leq n$, and let $h$ be a strictly decreasing sequence of length ${r+1}$ with $h_1 \leq n$ and $h_{r+1}=0$. Let $\alpha > 0$ and $y_1,\ldots,y_{r} \in [0,1]$ be such that for all $j \in \{1,\ldots,r-1\}$, $y_{j+1}-y_j \geq \alpha$. Set
$$L:=\left\{y \in [0,1] \mid \text{for all }j \in \{1,\ldots,r\},\ |y_j - y| \geq \alpha \right\}.$$
Then, for all $y \in L$,
$$s_{\lambda^h}(y,y_1,\ldots,y_{r}) \geq \alpha^{\frac{n^2}4}.$$
\end{proposition}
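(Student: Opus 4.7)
The plan is to reduce Proposition~\ref{s-lb2} to a direct application of Proposition~\ref{s-lb1}, using the symmetry of the Schur polynomial, with the choice $\delta := \alpha$ and the $(r{+}1)$-tuple $(y, y_1, \ldots, y_r)$. To do this, I need to verify the hypothesis of Proposition~\ref{s-lb1}, namely that at most one of these $r+1$ values is strictly smaller than $\alpha$.

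First, I would exploit the spacing hypothesis on the $y_j$'s. Since $y_1 \geq 0$ and $y_{j+1} - y_j \geq \alpha$ for all $j \in \{1,\ldots,r-1\}$, a simple induction gives $y_j \geq (j-1)\alpha$ for every $j \in \{1,\ldots,r\}$. In particular $y_2, \ldots, y_r \geq \alpha$, so among $y_1, \ldots, y_r$ only $y_1$ has any chance of being strictly smaller than $\alpha$.

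Next, I would handle the value $y$ using the definition of $L$. If $y_1 \geq \alpha$, then all of $y_1, \ldots, y_r$ are at least $\alpha$ and the condition "at most one strictly below $\alpha$" is satisfied whether or not $y < \alpha$. If instead $y_1 < \alpha$, I want to show $y \geq \alpha$. From $y \in L$ we have $|y - y_1| \geq \alpha$, so either $y \geq y_1 + \alpha \geq \alpha$ or $y \leq y_1 - \alpha < 0$; the latter is impossible as $y \in [0,1]$. Hence $y \geq \alpha$, and once again at most one entry of the tuple $(y, y_1, \ldots, y_r)$ — namely $y_1$ — is strictly smaller than $\alpha$.

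With this in hand, I would simply invoke Proposition~\ref{s-lb1} (applied to the tuple $(y, y_1, \ldots, y_r)$, the same strictly decreasing sequence $h$, and $\delta := \alpha$) to conclude
\[
s_{\lambda^h}(y, y_1, \ldots, y_r) \geq \alpha^{n^2/4},
\]
as required. There is no real obstacle here; the only minor subtlety is the case analysis on whether $y_1 \geq \alpha$, which is dispatched by the elementary observation above.
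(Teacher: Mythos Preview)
Your proposal is correct and follows essentially the same approach as the paper: reduce to Proposition~\ref{s-lb1} with $\delta=\alpha$ by observing that $y_2,\ldots,y_r\ge\alpha$ from the spacing hypothesis, and that if $y_1<\alpha$ then $y\in L$ forces $y\ge y_1+\alpha\ge\alpha$. The paper's write-up is slightly terser but the argument is the same.
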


\begin{proof}
Let $y \in L$. We must show that the numbers $y$, $y_1,\ldots,y_{r}$ fulfill the condition of Proposition~\ref{s-lb1} with $\alpha$ playing the role of $\delta$. Clearly, for all $j \in \{2,\ldots,r\}$, $y_j \geq \alpha$. Assume that $y_1 < \alpha$. Then, since $y \in L$, we cannot have $y < y_1$. Therefore $y - y_1 = |y_1 - y| \geq \alpha$, so $y \geq y_1 + \alpha \geq \alpha$.
\end{proof}

Consider now the case treated in Proposition~\ref{rt-cor1}. In our setting, what we do is to set $x_{r+1}:=1$ and for all $j \in \{1,\ldots,r\}$, $\alpha_j:=0$. Then the formula for $p$ becomes
$$p = l_{r+1}(X;x_1,\ldots,x_{r},1) \cdot\alpha_{r+1} \cdot \frac{s_{\lambda^d}(X,x_1,\ldots,x_{r})}{s_{\lambda^d}(x_1,\ldots,x_{r},1)},$$
and -- by suitably setting $\alpha_{r+1}$ -- we get
$$p=\prod_{i=1}^{r} (x_i-X) \cdot s_{\lambda^d}(X,x_1,\ldots,x_{r}).$$
Since in this case $x_1 > 0$, we may apply Proposition~\ref{s-lb1} for a $\delta \in (0,x_1)$ to obtain that the Schur factor is always strictly positive, and therefore the additional sign information given by Proposition~\ref{rt-cor1} immediately follows. We have thus derived in the process Proposition~\ref{rt-cor1} as a corollary of Theorem~\ref{rt-th2}.

\section{Main results}\label{sec:main}

The following two lemmas wrap up the results of the previous section and yield some bounds which are useful for our particular problem.

\begin{lemma}\label{beta-lemma}
Let $n$, $r \in \N$ with $r \leq n$ and $(d_i)_{i=1}^{r+1} \subseteq \N$ with $n \geq d_1 > d_2 > \ldots > d_{r+1} =0$. Let $\beta$, $\gamma > 0$ with $\beta \leq 1$ and $(x_j)_{j=1}^{r+1} \subseteq [0,1]$ be such that for all $j \in \{1,\ldots,r\}$, $x_{j+1}-x_j \geq \beta$. Suppose that we have a polynomial
$$p = \sum_{i=1}^{r+1} \eta_i X^{d_i}$$
such that for all $j \in \{1,\ldots,{r+1}\}$,
$$|p(x_j)|\leq \frac{\beta^{n+ \frac{n^2}4}}{N_n \cdot (n+1)} \cdot \gamma.$$
Then $\|p\| \leq \gamma$.
\end{lemma}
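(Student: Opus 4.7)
The plan is to apply the explicit interpolation formula \eqref{schf} from Section~\ref{sec:schur} with $\alpha_j := p(x_j)$, since $p$ is precisely a polynomial of the form described there (its monomial support is the degree sequence $(d_i)$, ending with $d_{r+1}=0$). This yields, for every $x \in [0,1]$,
$$p(x) = \sum_{j=1}^{r+1} l_j(x; x_1, \ldots, x_{r+1}) \cdot p(x_j) \cdot \frac{s_{\lambda^d}(x, x_1, \ldots, \widehat{x_j}, \ldots, x_{r+1})}{s_{\lambda^d}(x_1, \ldots, x_{r+1})},$$
so it suffices to bound each factor uniformly in $x \in [0,1]$ and sum.

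For the Lagrange coefficient, since $x, x_i \in [0,1]$ one has $\prod_{i \neq j}|x - x_i| \leq 1$, while the spacing hypothesis yields $|x_j - x_i| \geq |j-i|\beta \geq \beta$ for $i \neq j$, giving $|l_j(x; x_1, \ldots, x_{r+1})| \leq \beta^{-r}$. Proposition~\ref{s-ub} bounds each numerator Schur value by $N_n$. For the denominator Schur factor, I would invoke Proposition~\ref{s-lb1} with $\delta := \beta$: since $x_i \geq x_1 + (i-1)\beta \geq \beta$ for every $i \geq 2$, at most $x_1$ can lie below $\beta$, hence $s_{\lambda^d}(x_1, \ldots, x_{r+1}) \geq \beta^{n^2/4}$. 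This in particular guarantees the denominator is nonzero, so formula \eqref{schf} is legitimate to begin with.

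Combining the three bounds with the pointwise hypothesis on $|p(x_j)|$ via the triangle inequality gives
$$|p(x)| \leq (r+1) \cdot \beta^{-r} \cdot \frac{\beta^{n + n^2/4}}{N_n \cdot (n+1)} \gamma \cdot \frac{N_n}{\beta^{n^2/4}} = \frac{r+1}{n+1} \beta^{n-r} \gamma \leq \gamma,$$
where the last inequality uses $r \leq n$ and $0 < \beta \leq 1$. Taking the supremum over $x \in [0,1]$ concludes the proof. There is no real obstacle here: the constant $\frac{\beta^{n+n^2/4}}{N_n\cdot(n+1)}$ in the hypothesis has been tailored precisely so that the Lagrange factor ($\beta^{-r}$), the number of summands ($r+1$), the Schur upper bound ($N_n$), and the Schur lower bound ($\beta^{-n^2/4}$) are all absorbed, with a little slack coming from $\beta^{n-r}$ and $\tfrac{r+1}{n+1}$.
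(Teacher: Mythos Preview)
Your proof is correct and follows essentially the same approach as the paper's: both apply the interpolation formula~\eqref{schf}, bound the Lagrange factor via the spacing hypothesis, and bound the Schur ratio using Propositions~\ref{s-ub} and~\ref{s-lb1} with $\delta=\beta$. The only cosmetic difference is that you retain the sharper bounds $\beta^{-r}$ and $r+1$ until the final step (yielding the slack factor $\tfrac{r+1}{n+1}\beta^{n-r}\leq 1$), whereas the paper immediately replaces $r$ by $n$ throughout.
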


\begin{proof}
Let $x \in [0,1]$. By the formula \eqref{schf}, we have that
$$p(x) = \sum_{j=1}^{r+1} l_j(x;x_1,\ldots,x_{r+1}) \cdot p(x_j) \cdot \frac{s_{\lambda^d}(x,x_1,\ldots,\widehat{x_j},\ldots,x_{r+1})}{s_{\lambda^d}(x_1,\ldots,x_{r+1})}.$$
Clearly, we have, using that $\beta \leq 1$,
$$|l_j(x;x_1,\ldots,x_{r+1})| \leq \frac1{\prod_{i \neq j} \beta |i-j|} \leq \frac1{\beta^r} \leq \frac1{\beta^n}.$$
By Propositions~\ref{s-ub} and \ref{s-lb1},
$$\left|\frac{s_{\lambda^d}(x,x_1,\ldots,\widehat{x_j},\ldots,x_{r+1})}{s_{\lambda^d}(x_1,\ldots,x_{r+1})} \right| \leq \frac{N_n}{\beta^{\frac{n^2}4}}.$$
Therefore
\begin{align*}
|p(x)| &\leq \sum_{j=1}^{r+1} |l_j(x;x_1,\ldots,x_{r+1})| \cdot |p(x_j)| \cdot \left|\frac{s_{\lambda^d}(x,x_1,\ldots,\widehat{x_j},\ldots,x_{r+1})}{s_{\lambda^d}(x_1,\ldots,x_{r+1})}\right| \\
&\leq (n+1) \cdot \frac1{\beta^n} \cdot \frac{\beta^{n+ \frac{n^2}4}}{N_n \cdot (n+1)} \cdot \gamma \cdot \frac{N_n}{\beta^{\frac{n^2}4}} = \gamma,
\end{align*}
and we are done.
\end{proof}

\begin{lemma}\label{p-lb}
Let $n$, $r \in \N$ with $r \leq n$, and let $h$ be a strictly decreasing sequence of length ${r+1}$ with $h_1 \leq n$ and $h_{r+1}=0$. Let $\alpha \in (0,1]$ and $z_1,\ldots,z_{r} \in [0,1]$ such that for all $j \in \{1,\ldots,r-1\}$, $z_{j+1}-z_j \geq \alpha$. Set
$$L:=\left\{x \in [0,1] \mid \text{for all }j \in \{1,\ldots,r\},\ |z_j - x| \geq \alpha  \right\}$$
and
$$p:=\prod_{j=1}^{r} (z_j-X) s_{\lambda^h}(X,z_1,\ldots,z_{r}).$$
Then, for all $x \in L$,
$$|p(x)| \geq \alpha^{\frac{n^2}4+n}.$$
\end{lemma}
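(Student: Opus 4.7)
The plan is to simply factor the two pieces of $p(x)$ and bound each separately using assumptions and Proposition~\ref{s-lb2}. Concretely, for any $x \in L$, I would write
\[
|p(x)| = \prod_{j=1}^{r} |z_j - x| \cdot \bigl|s_{\lambda^h}(x, z_1, \ldots, z_r)\bigr|,
\]
and bound each factor from below independently.

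For the product factor, I would use directly the defining property of $L$, namely that $|z_j - x| \geq \alpha$ for all $j \in \{1,\ldots,r\}$, together with $\alpha \leq 1$ and $r \leq n$, to conclude
\[
\prod_{j=1}^{r} |z_j - x| \geq \alpha^{r} \geq \alpha^{n}.
\]

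For the Schur factor, this is exactly the situation set up in Proposition~\ref{s-lb2}: the sequence $z_1, \ldots, z_r$ has gaps of at least $\alpha$ by hypothesis, $h$ is strictly decreasing of length $r+1$ with $h_1 \leq n$ and $h_{r+1} = 0$, and $x \in L$ is exactly the membership condition required there. Applying the proposition yields
\[
s_{\lambda^h}(x, z_1, \ldots, z_r) \geq \alpha^{\frac{n^2}{4}}.
\]
Multiplying the two bounds gives $|p(x)| \geq \alpha^{n + \frac{n^2}{4}}$, which is the desired conclusion. There is no real obstacle here: the lemma is essentially a packaging of Proposition~\ref{s-lb2} together with a trivial product estimate on the linear factors, tailored to the specific form of $p$ that arose in the discussion following Proposition~\ref{rt-cor1}.
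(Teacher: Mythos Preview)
Your proof is correct and matches the paper's own argument essentially line for line: bound the product of linear factors by $\alpha^r \geq \alpha^n$ using the definition of $L$ and $\alpha \leq 1$, then invoke Proposition~\ref{s-lb2} for the Schur factor, and multiply.
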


\begin{proof}
Let $x \in L$. Since $\alpha \leq 1$,
$$\left|\prod_{j=1}^{r} (z_j-x) \right| \geq \prod_{j=1}^{r} |z_j-x| \geq \alpha^r \geq \alpha^n.$$
The result follows by Proposition~\ref{s-lb2}.
\end{proof}

We shall need in the sequel the following notion: a {\it modulus of uniform continuity} for a function $f: [0,1] \to \R$ is a function $\omega : (0, \infty) \to (0,\infty)$ such that for any $\eps >0$ and any $x$, $y \in [0,1]$ with $|x-y| < \omega(\eps)$, we have that $|f(x)-f(y)| < \eps$. Clearly, a function $f: [0,1] \to \R$ has a modulus of uniform continuity if and only if it is uniformly continuous.

\begin{notation}
Let $\omega : (0, \infty) \to (0,\infty)$, $n \in \N$ and $M \geq 0$. We shall set, for any $\eps>0$,
$$\chi_{\omega,n,M}(\eps):=\min \left( 1, \frac{\eps}{4n^2M+1}, \omega\left(\frac\eps2\right)\right).$$
\end{notation}

In addition, we shall need the following classical inequality.

\begin{lemma}[Markov brothers' inequality]\label{mark}
Let $q \in P_n$. Then
$$\max_{x \in [-1,1]} |q'(x)| \leq n^2 \cdot \max_{x\in[-1,1]} |q(x)|.$$
\end{lemma}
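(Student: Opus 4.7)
The plan is to prove this classical inequality by exploiting the extremality of the Chebyshev polynomial $T_n(x) = \cos(n \arccos x)$. I would first normalize: set $M := \max_{x \in [-1,1]}|q(x)|$, dispose of the trivial case $M = 0$, divide $q$ by $M$, and reduce to showing that $|q'(y_0)| \leq n^2$ for every $y_0 \in [-1,1]$ whenever $\max_{x \in [-1,1]}|q(x)| = 1$. The ingredients about $T_n$ that I would use are the following standard facts: $\max_{x \in [-1,1]}|T_n(x)| = 1$, $T_n(\xi_k) = (-1)^k$ at the $n+1$ extremal points $\xi_k := \cos(k\pi/n)$ for $k = 0,\ldots,n$, and $\max_{x \in [-1,1]} |T_n'(x)| = n^2$, attained at $x = \pm 1$.

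The core argument is by contradiction. Assume $|q'(y_0)| > n^2$ for some $y_0 \in [-1,1]$. Since $|T_n'(y_0)| \leq n^2 < |q'(y_0)|$, the scalar $\lambda := q'(y_0)/T_n'(y_0)$ is well defined (the degenerate case $T_n'(y_0) = 0$ is easier and handled separately) and satisfies $|\lambda| > 1$. Put $p := \lambda T_n - q$; then $p(\xi_k) = (-1)^k \lambda - q(\xi_k)$, and since $|\lambda| > 1 \geq |q(\xi_k)|$ these values alternate in sign as $k$ ranges over $\{0,\ldots,n\}$. The intermediate value theorem then produces at least $n$ distinct zeros of $p$ in $[-1,1]$, and Rolle's theorem promotes them to at least $n-1$ zeros of $p'$ strictly interlacing them. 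But by construction $p'(y_0) = 0$, so tallying $y_0$ as one more zero forces $p'$, which has degree at most $n-1$, to vanish identically; this contradicts the non-constancy of $T_n$.

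The main obstacle will be the delicate case analysis required when $y_0$ happens to coincide with one of the Rolle zeros, or with a zero of $p$ itself. Such collisions must be absorbed by promoting $y_0$ to a multiple root of $p'$ and refining the zero count, or alternatively by an induction on the degree $n$ as in V.\,A.\ Markov's original proof. The endpoints $y_0 = \pm 1$ — precisely the points where the bound is sharp, witnessed by $q = \pm T_n$ — are the places where the argument is tightest and deserve the most care. Finally, since the statement as used in the paper is standard and the inequality is invoked as a black box classical tool, in practice one can simply cite a textbook reference rather than reproducing the full argument.
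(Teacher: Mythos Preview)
The paper does not prove this lemma at all: it is introduced with the phrase ``the following classical inequality'' and stated without proof, then immediately used to derive Corollary~\ref{mark2}. Your final sentence already anticipates this correctly --- the inequality is invoked purely as a black-box tool, so the appropriate ``proof'' in context is simply a citation.

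Your sketch itself follows a recognizable route toward Markov's inequality, but as written it is not a complete argument. The step where you conclude that $p'$ has $n$ zeros (the $n-1$ Rolle zeros together with the extra zero at $y_0$) is exactly the place where the classical proof becomes subtle: when $y_0$ lies in the interior, nothing prevents it from coinciding with one of the Rolle zeros, and upgrading to a double root of $p'$ requires additional information you have not derived. The standard textbook proofs handle this either by first establishing Bernstein's inequality $|q'(x)| \leq n(1-x^2)^{-1/2}\max|q|$ for interior $x$ and then treating the endpoints separately, or by V.~A.~Markov's more elaborate induction. Your outline gestures at these difficulties but does not resolve them, so if you actually needed to supply a proof you would have more work to do; for the purposes of this paper, though, a reference suffices.
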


\begin{corollary}\label{mark2}
Let $p \in P_n$. Then
$$\max_{x \in [0,1]} |p'(x)| \leq 2n^2 \cdot \max_{x\in[0,1]} |p(x)|.$$
\end{corollary}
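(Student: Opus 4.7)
The plan is to reduce the statement to Markov brothers' inequality on $[-1,1]$ by means of an affine change of variables mapping $[0,1]$ onto $[-1,1]$.

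Concretely, given $p \in P_n$, I would define $q \in P_n$ by $q(t) := p\!\left(\frac{t+1}{2}\right)$ for $t \in [-1,1]$. The substitution $x = (t+1)/2$ is a bijection between $[-1,1]$ and $[0,1]$, so one immediately has $\max_{t \in [-1,1]} |q(t)| = \max_{x \in [0,1]} |p(x)|$. By the chain rule, $q'(t) = \tfrac{1}{2} p'\!\left(\frac{t+1}{2}\right)$, and hence $\max_{t \in [-1,1]} |q'(t)| = \tfrac{1}{2} \max_{x \in [0,1]} |p'(x)|$.

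Applying Lemma~\ref{mark} to $q$ yields
\[
\tfrac{1}{2} \max_{x \in [0,1]} |p'(x)| \;=\; \max_{t \in [-1,1]} |q'(t)| \;\leq\; n^2 \max_{t \in [-1,1]} |q(t)| \;=\; n^2 \max_{x \in [0,1]} |p(x)|,
\]
and multiplying by $2$ produces the desired bound. There is no real obstacle here; the only point to be careful about is tracking the factor of $1/2$ coming from the chain rule, which is precisely what converts the constant $n^2$ of Markov's inequality into the constant $2n^2$ appropriate for the interval $[0,1]$.
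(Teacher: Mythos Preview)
Your proof is correct and follows essentially the same approach as the paper: define $q(t)=p\!\left(\frac{t+1}{2}\right)$, use the chain rule to pick up the factor $\tfrac12$, and apply Lemma~\ref{mark}. The paper's write-up is slightly more terse, but the argument is identical.
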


\begin{proof}
Let $q:=p\left(\frac{X+1}2\right)$. Then $q \in P_n$ and $q'=\frac12 \cdot p'\left(\frac{X+1}2\right)$. Using Lemma~\ref{mark}, we see that
\begin{align*}
\max_{x \in [0,1]} |p'(x)| &=\max_{x \in [-1,1]} \left|p'\left(\frac{x+1}2\right)\right| = 2 \cdot \max_{x \in [-1,1]} |q'(x)| \\
&\leq  2n^2 \cdot \max_{x\in[-1,1]} |q(x)| = 2n^2 \cdot \max_{x\in[-1,1]} \left|p\left(\frac{x+1}2\right)\right| = 2n^2 \cdot \max_{x\in[0,1]} |p(x)|.
\end{align*}
\end{proof}

\begin{corollary}\label{mark3}
Let $p \in P_n$ and $k \in \mathbb{N}$. Then
$$\max_{x \in [0,1]} |p^{(k)}(x)| \leq (2n^2)^k \cdot \max_{x\in[0,1]} |p(x)|.$$
In addition, if $a_k$ is the $k$'th coefficient of $p$, then
$$|a_k| \leq \frac{(2n^2)^k}{k!} \max_{x\in[0,1]} |p(x)|.$$
\end{corollary}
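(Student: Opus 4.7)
The plan is to prove both statements by straightforward induction and Taylor expansion, building on Corollary \ref{mark2}.

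First I would prove the derivative bound $\max_{x\in[0,1]}|p^{(k)}(x)| \leq (2n^2)^k \max_{x\in[0,1]}|p(x)|$ by induction on $k$. The base case $k=0$ is trivial. For the inductive step, observe that since $p \in P_n$, every iterated derivative $p^{(j)}$ is again in $P_n$ (in fact in $P_{n-j}$, but containment in $P_n$ is all we need). This means Corollary \ref{mark2} can be applied to $p^{(k-1)}$, yielding $\max_{x\in[0,1]}|p^{(k)}(x)| \leq 2n^2 \cdot \max_{x\in[0,1]}|p^{(k-1)}(x)|$, and composing with the inductive hypothesis closes the induction.

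For the coefficient bound, I would invoke Taylor's formula at $0$: if $p = \sum_{i=0}^n a_i X^i$, then $p^{(k)}(0) = k! \cdot a_k$. Hence $|a_k| = |p^{(k)}(0)|/k! \leq \max_{x\in[0,1]}|p^{(k)}(x)|/k!$, and the first part of the corollary gives the claimed bound.

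There is no real obstacle here — both parts are routine consequences of Corollary \ref{mark2}. The only mild subtlety to flag is that one must notice $P_n$ is closed under differentiation (in the weak sense $p^{(j)} \in P_n$), so the Markov estimate can be iterated without any degradation of the $n$ in the constant $2n^2$; using the sharper $2(n-j)^2$ at each step would only improve the bound and is unnecessary for the stated inequality.
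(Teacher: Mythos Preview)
Your proposal is correct and matches the paper's own proof essentially verbatim: the paper also derives the first statement by induction on $k$ from Corollary~\ref{mark2} and the second from the identity $p^{(k)}(0)=k!\,a_k$. There is nothing to add.
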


\begin{proof}
The first statement follows easily from Corollary~\ref{mark2}, by induction on $k$. For the second statement, we use the fact that $p^{(k)}(0)=k! \cdot a_k$.
\end{proof}

\begin{proposition}[{cf. \cite[p. 318]{Koh08}}]\label{chi-prop}
Let $\omega : (0, \infty) \to (0,\infty)$, $n \in \N$ and $M \geq 0$. Let $p \in P_n$ with $\|p\| \leq M$ and $f:[0,1] \to \R$ be such that $\omega$ is a modulus of uniform continuity for $f$. Then $\chi_{\omega,n,M}$ is a modulus of uniform continuity for $p-f$.
\end{proposition}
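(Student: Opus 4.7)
The plan is the standard split: bound $|(p-f)(x)-(p-f)(y)|$ by $|p(x)-p(y)|+|f(x)-f(y)|$ and show each piece is less than $\eps/2$ whenever $|x-y|<\chi_{\omega,n,M}(\eps)$. The $\omega(\eps/2)$ term in the definition of $\chi_{\omega,n,M}$ is clearly tailored to dominate $|f(x)-f(y)|<\eps/2$ directly from the uniform continuity hypothesis on $f$, so the content is really in the polynomial estimate.

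For the polynomial, I would use the mean value theorem: for some point $\xi$ between $x$ and $y$,
$$|p(x)-p(y)|=|p'(\xi)|\cdot|x-y|\leq \max_{t\in[0,1]}|p'(t)|\cdot|x-y|.$$
Now Corollary~\ref{mark2} applied to $p\in P_n$ gives $\max_{t\in[0,1]}|p'(t)|\leq 2n^2\|p\|\leq 2n^2 M$. Hence
$$|p(x)-p(y)|\leq 2n^2 M\cdot|x-y|.$$
Since $|x-y|<\chi_{\omega,n,M}(\eps)\leq \eps/(4n^2M+1)$, this yields $|p(x)-p(y)|\leq 2n^2M\eps/(4n^2M+1)<\eps/2$. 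Note the purpose of the $+1$ in the denominator $4n^2M+1$: it sidesteps division by zero in the degenerate case $M=0$ (where the estimate is trivial anyway), and it also guarantees the strict inequality $2n^2M/(4n^2M+1)<1/2$.

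The $\min$ with $1$ in the definition of $\chi_{\omega,n,M}$ plays a purely cosmetic role here: it ensures the output takes values in $(0,1]$, which is convenient for later use but is not needed for the argument above. Combining the two pieces, for any $\eps>0$ and $x,y\in[0,1]$ with $|x-y|<\chi_{\omega,n,M}(\eps)$,
$$|(p-f)(x)-(p-f)(y)|\leq |p(x)-p(y)|+|f(x)-f(y)|<\frac{\eps}{2}+\frac{\eps}{2}=\eps,$$
so $\chi_{\omega,n,M}$ is a modulus of uniform continuity for $p-f$. There is no real obstacle; the only subtle point is the careful choice of denominator that makes the proposition hold uniformly in $M\geq 0$, and the use of the Markov-type corollary to trade the polynomial's derivative bound for a bound in terms of its sup norm.
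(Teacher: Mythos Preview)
Your proof is correct and matches the paper's own argument essentially line for line: both use Corollary~\ref{mark2} to bound $\|p'\|\leq 2n^2M$, apply the mean value theorem to get $|p(x)-p(y)|<2n^2M\eps/(4n^2M+1)<\eps/2$, and invoke $|x-y|<\omega(\eps/2)$ for the $f$-part. Your additional remarks on the role of the $+1$ and the $\min$ with $1$ are accurate commentary but not part of the paper's proof.
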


\begin{proof}
Let $\eps >0$ and $x$, $y \in [0,1]$ with $|x-y| < \chi_{\omega,n,M}(\eps)$. By Corollary~\ref{mark2}, we have that $\|p'\| \leq 2n^2\|p\| \leq 2n^2M$. Applying now the mean value theorem, we get that there is a $c \in (x,y) \subseteq (0,1)$ such that
$$|p(x)-p(y)| = |p'(c)||x-y| \leq 2n^2M|x-y| < \frac{2n^2M\eps}{4n^2M+1} < \frac\eps2.$$
In addition, since $|x-y| < \omega\left(\frac\eps2\right)$, we also have that $|f(x) - f(y)| <\frac\eps2$, from which the conclusion follows.
\end{proof}

\begin{notation}
For all $n \geq 0$, put
$$F_n:=\frac{3}2\max_{i \in \{0,\ldots,n\}} \frac{(2n^2)^{i}}{i!} \geq 1.$$
\end{notation}

The following theorem is an analogue of \cite[Theorem 16.26]{Koh08} and may be considered to be a generalized (`approximate') version of the corresponding alternation theorem for this setting, i.e. the case where $0<k_0$ of \cite[Theorem 3]{RouTay71}, which we recover by setting $\eps:=0$.

\begin{theorem}\label{alt-mess}
Let $n$, $m \in \N$ be such that $m \leq n$ and $(k_i)_{i=1}^m \subseteq \N$ be such that $0 < k_1 < \ldots < k_m \leq n$. In addition, let $(a_i)_{i=1}^m$, $(b_i)_{i=1}^m \subseteq \R \cup \{\pm\infty\}$ be such that for all $i \in \{1,\ldots,m\}$, $a_i \leq b_i$, $a_i \neq \infty$ and $b_i \neq -\infty$. Set
$$K:=\left\{ \sum_{i=0}^n c_iX^i \in P_n \mid \ \text{for all }i \in \{1,\ldots,m\},\ a_i \leq c_{k_i} \leq b_i\right\}.$$
Let $p_0 \in K$. Let $f: [0,1] \to \R$ and let $\omega : (0, \infty) \to (0,\infty)$ be a modulus of uniform continuity for $f$. Set $M:=\frac52\|f\| + \frac32\|p_0\|$ and
$$E:=\min_{q \in K} \|f-q\|.$$
Let $\eps \in \left[0, \frac{E}4\right)$, $L \in (0,E]$ and $p \in K$ such that $\|p\|\leq M$ and
$$\|f-p\| \leq E +  \frac{\left(\frac{\chi_{\omega,n,M}\left(\frac{L}2\right)}2\right)^{\frac{n^2}4+n}}{N_n} \cdot\eps.$$
Set $\mu:= F_n \cdot\eps$. Let $(c_j)_{j=0}^n$ be the coefficients of $p$. Put $l \leq m$ and $(e_v)_{v=1}^l$ -- uniquely determined! -- such that:
\begin{enumerate}[(i)]
\item $1 \leq e_1 < \ldots < e_l \leq m$;
\item for all $v \in \{1,\ldots,l\}$, $c_{k_{e_v}} \leq a_{e_v} + \mu$ or $c_{k_{e_v}} \geq b_{e_v} - \mu$;
\item for all $i \in \{1,\ldots,m\} \setminus \{e_1,\ldots,e_l\}$, $a_i + \mu < c_{k_i} < b_i - \mu$.
\end{enumerate}
Then there is a finite sequence $(x_j)_{j=1}^{n+1-l} \subseteq [0,1]$ with $x_1<\ldots<x_{n+1-l}$ and there is a $\nu \in \{\pm1\}$ such that for all $i \in \{1,\ldots,n+1-l\}$,
$$|\nu(-1)^i(f(x_i)-p(x_i)) - E| \leq \eps.$$
\end{theorem}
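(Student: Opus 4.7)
The plan is to argue by contradiction. Supposing the asserted alternation fails, I would construct $\tilde p \in K$ with $\|f - \tilde p\| < E$, contradicting the minimality of $E$. Set $\alpha := \chi_{\omega,n,M}(L/2)/2$ and $C := \alpha^{n^2/4 + n}/N_n$.

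The first step is to extract from the failure hypothesis a collection of $n - l$ separators $0 < z_1 < \ldots < z_{n-l} < 1$, pairwise $\alpha$-separated and $\alpha$-separated from $\{0, 1\}$, such that on the partition into subintervals $(z_j, z_{j+1})$ (with $z_0 := 0$, $z_{n+1-l} := 1$) the error $f - p$ splits into ``live'' subintervals, where $f - p$ attains near-extremal values of a consistent sign $\nu(-1)^j$ for a fixed $\nu \in \{\pm 1\}$, and ``dead'' subintervals where $|f - p| < L/2$ throughout. ``Real'' separators between opposite-signed live subintervals are placed at zeros of $f - p$, while remaining ``dummy'' separators are inserted in dead zones of sufficient width. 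Producing this layout is the principal obstacle: it requires a careful quantitative analysis of the $(E-\eps)$-extremal set together with a use of the modulus $\chi_{\omega,n,M}$ to locate dead zones wide enough to accommodate the $\alpha$-spacing.

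With the separators in hand, apply Proposition~\ref{rt-cor1} with exclusion set $\{k_{e_1}, \ldots, k_{e_l}\}$ and interpolation nodes $z_1, \ldots, z_{n-l}$: by the derivation in Section~\ref{sec:schur}, this yields
$$q = \prod_{j=1}^{n-l} (z_j - X) \cdot s_{\lambda^d}(X, z_1, \ldots, z_{n-l}) \in P_n,$$
which vanishes at the $z_j$'s, has zero coefficients at the positions $k_{e_v}$, and has sign $(-1)^j$ on each $(z_j, z_{j+1})$. Proposition~\ref{s-ub} gives $\|q\| \leq N_n$, and Lemma~\ref{p-lb} gives $|q(x)| \geq \alpha^{n^2/4 + n}$ whenever $x$ is at distance $\geq \alpha$ from every $z_j$. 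After replacing $q$ by $-q$ if needed so that $q$ has the sign opposite to $f - p$ on every live subinterval, set $\lambda := 3\eps / (2\|q\|)$ and $\tilde p := p - \lambda q$. To verify $\tilde p \in K$: at positions $k_{e_v}$, the coefficient of $\tilde p$ agrees with that of $p$, hence lies in $[a_{e_v}, b_{e_v}]$; at any other constraint position $k_i$, Corollary~\ref{mark3} and the definition of $F_n$ give $|\lambda q_{k_i}| \leq \lambda \cdot \tfrac{2}{3} F_n \|q\| = F_n \eps = \mu$, which by condition (iii) keeps the perturbed coefficient inside $[a_i, b_i]$.

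Finally, I bound $\|f - \tilde p\|$ strictly below $E$ by splitting $[0,1]$ into two regions. For $x$ in the central part of a live subinterval (distance $\geq \alpha$ from every separator), sign alignment gives $|f(x) - \tilde p(x)| = \bigl||f(x) - p(x)| - \lambda |q(x)|\bigr|$, which is at most $(E + C\eps) - \tfrac{3}{2}C\eps = E - \tfrac{C\eps}{2}$ when $\lambda |q(x)| \leq |f(x) - p(x)|$, and at most $\lambda\|q\| = \tfrac{3\eps}{2} < E$ otherwise. For $x$ within $\alpha$ of some $z_j$ or anywhere in a dead subinterval, uniform continuity of $f - p$ (Proposition~\ref{chi-prop}) together with $(f - p)(z_j) = 0$ at real separators, or the dead-zone property at dummies, yields $|f(x) - p(x)| \leq L/2$, so $|f(x) - \tilde p(x)| \leq L/2 + \lambda\|q\| \leq \tfrac{E}{2} + \tfrac{3E}{8} < E$, using $L \leq E$ and $\eps < E/4$. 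Hence $\|f - \tilde p\| < E$, contradicting $\tilde p \in K$ and the definition of $E$.
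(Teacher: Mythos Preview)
Your overall contradiction strategy and the later steps—building the sign-correcting polynomial $q$ from the Schur construction, checking $\tilde p\in K$ via Corollary~\ref{mark3}, and the two-region estimate for $\|f-\tilde p\|$—track the paper's argument closely, with only cosmetic differences in the choice of the perturbation coefficient.

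The genuine gap is your first step, which you yourself label the principal obstacle and do not carry out. Committing to exactly $n-l$ separators forces you, whenever the number $w$ of sign groups of the near-extremal set is strictly below $n+1-l$, to pad with dummy separators. But every separator flips the sign of $q$, while $f-p$ does \emph{not} change sign at a dummy. Your layout requires the $j$-th live subinterval to carry the sign $\nu(-1)^j$, which forces an \emph{even} number of dead subintervals between any two consecutive live ones. That parity constraint, together with the requirement that each dead subinterval lie inside a zone where $|f-p|<L/2$ (strictly stronger than ``non-special'') and that all separators be $\alpha$-spaced, need not be satisfiable. For instance, when $w=n-l$ exactly one dead subinterval is needed; it cannot sit between two sign groups without breaking parity, yet nothing guarantees a dead zone of width $\geq\alpha$ near $0$ or $1$. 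Placing real separators at zeros of $f-p$ rather than on a fixed grid also gives no automatic control on their mutual spacing.

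The paper avoids all of this by not fixing the number of separators in advance. It partitions $[0,1]$ into a grid of step $\chi(L/2)$, marks the \emph{special} intervals (those meeting the $(E-\eps)$-level set), counts their sign groups $w$, and—assuming $w\leq n-l$—takes exactly $w-1$ separators at midpoints of non-special grid intervals lying between consecutive groups; this yields the $\alpha$-spacing and the correct sign alignment automatically. The perturbation $\rho$ is then built using only $w$ exponents $d_1>\cdots>d_w=0$ chosen inside $\{0,\ldots,n\}\setminus\{k_{e_v}\}$; the freedom to use fewer than the full $n+1-l$ available exponents is precisely what lets $\rho$ match the actual sign structure of $f-p$ without any padding.
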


\begin{proof}
Put, for each $v \in \{1,\ldots,l\}$, $g_v:=k_{e_v}$.

Since $\|p\| \leq M$, by Proposition~\ref{chi-prop}, $\chi_{\omega,n,M}$ is a modulus of uniform continuity for $p-f$. We shall write in the remainder of the proof $\chi$ instead of $\chi_{\omega,n,M}$.

Divide the interval $[0,1]$ into subintervals $I_1,\ldots,I_u$ of length $\chi\left(\frac{L}2\right)$, except for the last one which may be shorter. The amplitude of $p-f$ on each such interval is less than $\frac{L}2$, so it is less than $\frac{E}2$. Among those intervals, we distinguish {\it special intervals} as being those intervals which contain a point $x$ with $E - \eps \leq |p(x)-f(x)| \leq E + \eps$. Since $\eps < \frac{E}2$, the function $p-f$ is nonzero -- with constant sign -- on each special interval. We therefore classify special intervals into positive and negative intervals, and if we conceive of their enumeration to consist of successive groups of positive and negative intervals, our goal is to show that the number of these groups is at least $n+1-l$. Assume without loss of generality that the first special interval is positive.

Since $\chi\left(\frac{L}2\right) \leq 1$ (by its definition), we have that 
$$\|f-p\| \leq E +  \frac{\left(\frac{\chi\left(\frac{L}2\right)}2\right)^{\frac{n^2}4+n}}{N_n} \cdot\eps \leq E + \eps.$$

\ \\[2mm]\noindent {\bf Claim} (cf. \cite[Lemmas 16.5 and 16.25]{Koh08}){\bf .} The number $w$ of special interval groups is at least $2$.\\[1mm]

\noindent {\bf Proof of the claim:} We have to show that there is an $x$ such that $p(x)-f(x) \leq -E+\eps$ and an $x$ such that $E-\eps \leq p(x)-f(x)$. We shall show only the existence of the first kind of $x$, the existence of the second kind following similarly. Assume towards a contradiction that for all $x\in [0,1]$,
$$p(x)-f(x) > -E+\eps,$$
so
$$\min_{x\in [0,1]} (p(x)-f(x)) > -E+\eps.$$

Set $h:=\frac12\left(\min_{x\in [0,1]} (p(x)-f(x))+E\right)+\frac\eps2$. Note that $\eps < h \leq E+\eps$ and that
$$\min_{x\in [0,1]} (p(x)-f(x))  = -E-\eps+2h,$$
so for all $x\in [0,1]$,
$$-E -\eps + 2h \leq p(x)-f(x) \leq E+\eps,$$
that is, by subtracting $h$,
$$-E-\eps+h \leq p(x)-h-f(x) \leq E+\eps-h.$$
Now remark that $0\leq E+\eps-h<E$, so if we put $q:=p-h$, we have that $q \in K$ (as $0 < k_1$) and that $\|f-q\| \leq E+\eps-h < E$. On the other hand, we know that
$$E=\min_{q \in K} \|f-q\|,$$
so we have a contradiction.\hfill $\blacksquare$\\[2mm]

Now, assume towards a contradiction that $w<n+1-l$. By the constant sign property, we may select between each two successive groups one non-special interval. Take $z_1,\ldots,z_{w-1}$ to be the midpoints of these selected non-special intervals.

By our assumption, we may choose $(d_i)_{i=1}^w \subset \N$ such that $n \geq d_1 > \ldots > d_w = 0$ and $\{d_1,\ldots,d_w\} \subseteq \{0,\ldots,n\} \setminus \{g_1,\ldots,g_l\}$.

Set
$$\rho:=\prod_{i=1}^{w-1} (z_i-X) \cdot s_{\lambda^d}(X,z_1,\ldots,z_{w-1}).$$

By the discussion at the end of the previous section, we have that the degree of $\rho$ is less than or equal to $n$, and the coefficients of degree $g_1,\ldots, g_l$ are zero. In addition, on each special interval, $\rho$ is nonzero and has the same sign as $p-f$. Clearly $\|\rho\| \leq N_n$ and by the definition of the $z_i$'s we have that for all $j \in \{1,\ldots,w-2\}$,
$$z_{j+1}-z_j \geq 2\chi\left(\frac{L}2\right) \geq \frac{\chi\left(\frac{L}2\right)}2.$$
Put
$$L:=\left\{x \in [0,1] \mid \text{for all }i \in \{1,\ldots,r\},\ |z_j - x| \geq \frac{\chi\left(\frac{L}2\right)}2 \right\}.$$
Then, again by the definition of the $z_i$'s, we have that all special intervals are contained in $L$. By Lemma~\ref{p-lb}, we have that for any $x$ in a special interval,
\begin{equation}\label{sinv}
|\rho(x)|\geq \left(\frac{\chi\left(\frac{L}2\right)}2\right)^{\frac{n^2}4+n}.
\end{equation}
Let $E^*$ be the maximum taken over all $x$ in non-special intervals of $|p(x)-f(x)|$, which is strictly smaller than $E-\eps$. Since, in addition, $\eps < \frac E4$ and $\|\rho\| \leq N_n$, one may choose $\lambda > 0$ with $\lambda \leq \frac\eps{2N_n}$, $\lambda\|\rho\| < E - E^* - \eps$ and $\lambda\|\rho\| \leq \frac E4 - \frac{\eps}{N_n}\|\rho\|$.

Put $Q:=p - \left(\lambda + \frac\eps{N_n} \right) \cdot \rho$. Clearly, $Q \in P_n$. We want to show that $Q \in K$. Let $(c'_j)_{j=0}^n$ be the coefficients of $Q$ and let $i \in \{1,\ldots,m\}$. We must show that $a_i \leq c'_{k_i} \leq b_i$. If there is a $v$ such that $i=e_v$, then $k_i=k_{e_v}=g_v$ and therefore the $k_i$'th coefficient of $\rho$ is zero and so $c'_{k_i}=c_{k_i}$ -- the conclusion then follows because $p \in K$. If there isn't a $v$ with $i=e_v$, we have that $a_i + \mu < c_{k_i} < b_i - \mu$. Using Corollary~\ref{mark3}, we have that
$$\left(\lambda+\frac\eps{N_n}\right)|\rho_{k_i}| \leq \left(\frac\eps{2N_n}+\frac\eps{N_n}\right)|\rho_{k_i}| \leq \frac{3\eps}{2N_n} \frac{(2n^2)^{k_i}}{k_i!} \|\rho\| \leq \frac{3\eps}{2} \frac{(2n^2)^{k_i}}{k_i!} \leq \mu.$$
Then we have that
$$\mu - \left(\lambda+\frac\eps{N_n}\right)\rho_{k_i} \geq 0,$$
so
$$c'_{k_i} = c_{k_i} - \left(\lambda+\frac\eps{N_n}\right)\rho_{k_i} > a_i + \mu - \left(\lambda+\frac\eps{N_n}\right)\rho_{k_i} \geq a_i.$$
Similarly one shows $c'_{k_i} \leq b_i$.

We shall now show that for all $x \in [0,1]$, $|Q(x)-f(x)| < E$, contradicting the definition of $E$.

If $x$ is not in a special interval, then
$$|Q(x) - f(x)| \leq |p(x) - f(x)| + \left(\lambda+\frac\eps{N_n}\right)|\rho(x)| \leq E^* + \frac\eps{N_n}|\rho(x)| + \lambda|\rho(x)| < E^* + \eps + E - E^* - \eps = E.$$

If $x$ is in a special interval, then on the one hand
$$|p(x) - f(x)| \geq E - \eps - \frac{E}2 > \frac{E}4$$
and on the other hand
$$\left(\lambda+\frac\eps{N_n}\right)|\rho(x)| \leq \frac\eps{N_n}|\rho(x)| + \lambda|\rho(x)| \leq \frac{E}4.$$
Since in this case, $p(x) - f(x)$ and $\left(\lambda+\frac\eps{N_n}\right)\rho(x)$ have the same sign, one may write, using \eqref{sinv},
\begin{align*}
|Q(x) - f(x)| &= \left|p(x) - f(x) - \left(\lambda+\frac\eps{N_n}\right)\rho(x)\right| \\
&= |p(x) - f(x)| - \left|\left(\lambda+\frac\eps{N_n}\right)\rho(x)\right| \\
&\leq E + \frac{\left(\frac{\chi\left(\frac{L}2\right)}2\right)^{\frac{n^2}4+n}}{N_n} \cdot\eps - \frac\eps{N_n}|\rho(x)| - \lambda|\rho(x)| < E.
\end{align*}
The conclusion now follows.
\end{proof}

The following theorem is the main result of this paper (and it is the analogue of \cite[Theorem 16.30]{Koh08}). Similarly to the previous theorem, it implies back the ordinary uniqueness result of Theorem~\ref{rt-thm}, that is, \cite[Theorem 5]{RouTay71}.

\begin{theorem}[{effective modulus of uniqueness}]\label{modulus}
Let $n$, $m \in \N$ be such that $m \leq n$ and $(k_i)_{i=1}^m \subseteq \N$ be such that $0 < k_1 < \ldots < k_m \leq n$. In addition, let $(a_i)_{i=1}^m, (b_i)_{i=1}^m \subseteq \R \cup \{\pm\infty\}$ be such that for all $i \in \{1,\ldots,m\}$, $a_i \leq b_i$, $a_i \neq \infty$ and $b_i \neq -\infty$. Set
$$K:=\left\{ \sum_{i=0}^n c_iX^i \in P_n \mid \ \text{for all }i \in \{1,\ldots,m\},\ a_i \leq c_{k_i} \leq b_i\right\}.$$
Let $p_0 \in K$. Let $f: [0,1] \to \R$ and let $\omega : (0, \infty) \to (0,\infty)$ be a modulus of uniform continuity for $f$. Set $M:=\frac52\|f\| + \frac32\|p_0\|$ and
$$E:=\min_{q \in K} \|f-q\|.$$
Let $\delta \geq 0$, $L \in (0,E]$ and $p_1$, $p_2 \in K$ such that for each $i \in \{1,2\}$,
$$\|f-p_i\| \leq E +  \frac{\left(\frac{\chi_{\omega,n,M}\left(\frac{L}2\right)}2\right)^{\frac{n^2}2+2n}}{10 \cdot N_n^2(n+1)(nF_n+1)}\cdot\delta.$$
Then $\|p_1 - p_2\| \leq \delta$.
\end{theorem}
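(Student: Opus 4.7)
The plan is to apply the approximate alternation theorem (Theorem~\ref{alt-mess}) to the midpoint $p := (p_1 + p_2)/2 \in K$, and then to invoke Lemma~\ref{beta-lemma} on a residue polynomial extracted from $p_1 - p_2$ in order to propagate pointwise smallness into uniform smallness. Writing $\chi := \chi_{\omega,n,M}(L/2)$ and $\eps_T := \frac{(\chi/2)^{n^2/4+n}}{10 N_n(n+1)(nF_n+1)}\delta$, the hypothesis rewrites as $\|f - p_i\| \leq E + \frac{(\chi/2)^{n^2/4+n}}{N_n}\eps_T$, matching the format required by Theorem~\ref{alt-mess}. Write $\tau$ for the common tolerance on $\|f - p_i\|$; note $\tau \leq \eps_T$. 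The trivial case $\eps_T \geq E/4$ is handled directly, since $\|p_1 - p_2\| \leq 2(E + \tau) \leq 10\eps_T \leq \delta$ by the calibration of constants. Thereafter one assumes $\eps_T < E/4$. By convexity $p \in K$ and $\|f - p\| \leq E + \tau$; using $E \leq \|f\| + \|p_0\|$ and $\tau < E/4$ one also gets $\|p\| \leq M$. Applying Theorem~\ref{alt-mess} with $\eps := \eps_T$ then yields an active index set $\{e_1, \ldots, e_l\}$, alternation points $x_1 < \ldots < x_{n+1-l}$, and a sign $\nu \in \{\pm 1\}$. Inspection of the construction in the proof of Theorem~\ref{alt-mess} (subintervals of length $\chi$, with at least one non-special interval separating consecutive groups of special intervals) ensures $x_{j+1} - x_j \geq \chi$.

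The crucial observation is that activity of a constraint at the midpoint pulls the corresponding coefficients of $p_1$ and $p_2$ toward the same boundary: if $c_{k_{e_v}} \leq a_{e_v} + \mu$, then since both $c^{(1)}_{k_{e_v}}$ and $c^{(2)}_{k_{e_v}}$ lie in $[a_{e_v}, b_{e_v}]$ with average $\leq a_{e_v} + \mu$, both must lie in $[a_{e_v}, a_{e_v} + 2\mu]$; the upper case is analogous. Hence $|c^{(1)}_{k_{e_v}} - c^{(2)}_{k_{e_v}}| \leq 2\mu = 2F_n\eps_T$ for every $v$. Define the residue
$$P := (p_1 - p_2) - \sum_{v=1}^l (c^{(1)}_{k_{e_v}} - c^{(2)}_{k_{e_v}}) X^{k_{e_v}},$$
which is supported on the $n+1-l$ degrees of $\{0, 1, \ldots, n\} \setminus \{k_{e_1}, \ldots, k_{e_l}\}$ and includes degree $0$ (since each $k_i > 0$), putting $P$ exactly into the form required by Lemma~\ref{beta-lemma}. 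At each alternation point $x_i$, writing $s_i := \nu(-1)^i$, the averaging identity $s_i(f - p)(x_i) = \tfrac12 s_i(f - p_1)(x_i) + \tfrac12 s_i(f - p_2)(x_i)$ combined with $|s_i(f - p)(x_i) - E| \leq \eps_T$ and $|s_i(f - p_j)(x_i)| \leq E + \tau$ forces each $s_i(f - p_j)(x_i)$ into the interval $[E - 2\eps_T - \tau,\, E + \tau]$, so $|p_1(x_i) - p_2(x_i)| \leq 2\eps_T + 2\tau \leq 4\eps_T$, and consequently $|P(x_i)| \leq 4\eps_T + 2nF_n\eps_T = 2(2 + nF_n)\eps_T$.

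The endgame is an application of Lemma~\ref{beta-lemma} to $P$ with $\beta := \chi$ and $\gamma := \frac{2(2 + nF_n) N_n (n+1)\eps_T}{\chi^{n+n^2/4}}$, which satisfies its hypothesis by the previous bound. Substituting the definition of $\eps_T$ and using the elementary inequalities $2 + nF_n \leq 2(nF_n + 1)$ and $2^{n^2/4 + n} \geq 1$ yields $\|P\| \leq \gamma \leq 2\delta/5$. The triangle inequality combined with $\|(p_1 - p_2) - P\| \leq \sum_v |c^{(1)}_{k_{e_v}} - c^{(2)}_{k_{e_v}}| \leq 2nF_n\eps_T \leq \delta/5$ then gives $\|p_1 - p_2\| \leq 3\delta/5 \leq \delta$. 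The main difficulty is the bookkeeping of constants: the doubled exponent $n^2/2 + 2n$ in the hypothesis (twice the $n^2/4 + n$ appearing in Theorem~\ref{alt-mess}) and the prefactor $10 N_n^2(n+1)(nF_n+1)$ are calibrated precisely so that both summands $\|P\|$ and $\|(p_1-p_2)-P\|$ fit comfortably under $\delta$.
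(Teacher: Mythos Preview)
Your proof is correct and follows essentially the same approach as the paper's: apply Theorem~\ref{alt-mess} to the midpoint $p=(p_1+p_2)/2$, split $p_1-p_2$ into the ``active'' part $Q_1$ supported on the degrees $k_{e_v}$ (bounded coefficientwise by $2\mu$) and the residue $Q_2$, bound $|Q_2(x_j)|$ via the averaging/alternation argument, and finish with Lemma~\ref{beta-lemma}. The one place where you diverge is the spacing of the alternation points: you obtain $x_{j+1}-x_j \geq \chi$ by inspecting the construction inside the proof of Theorem~\ref{alt-mess}, whereas the paper derives it purely from the \emph{statement} by showing that the alternation condition forces $|(p-f)(x_{j+1})-(p-f)(x_j)| \geq L/2$ and then invoking the contrapositive of $\chi_{\omega,n,M}$ being a modulus of uniform continuity for $p-f$ --- this is cleaner, since Theorem~\ref{alt-mess} as stated does not actually exhibit a construction of the $x_j$.
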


\begin{proof}
If $E \leq \frac25 \delta$, then, since $\|f-p_1\| \leq E + \frac1{10} \cdot\delta$ and $\|f-p_2\| \leq E + \frac1{10} \cdot\delta$,
$$\|p_1-p_2\| \leq \|f-p_1\| + \|f-p_2\| \leq 2E + \frac15 \cdot \delta \leq \delta.$$
Therefore we may assume for the rest of the proof that $E > \frac25 \delta$.

Now, assume towards a contradiction that $\|p_1\| > M = \frac52\|f\| + \frac32\|p_0\|$. Then
$$\|f-p_1\| \geq \|p_1\| - \|f\| > \frac32\|f\| + \frac32\|p_0\| \geq \frac32\|f-p_0\| \geq \frac32 E.$$
On the other hand,
$\|f-p_1\| \leq E + \frac1{10} \cdot\delta$, so $\frac32 E \leq E + \frac1{10} \cdot\delta$, which contradicts the fact that $E > \frac25 \delta$. Thus, $\|p_1\| \leq M$ and similarly $\|p_2\| \leq M$. Put $p:=\frac{p_1+p_2}2$. Then $p\in K$, $\|p\|\leq M$ and
$$\|f-p\| \leq E +  \frac{\left(\frac{\chi_{\omega,n,M}\left(\frac{L}2\right)}2\right)^{\frac{n^2}2+2n}}{10 \cdot N_n^2(n+1)(nF_n+1)}\cdot\delta.$$
Put
$$\eps:= \frac{\left(\frac{\chi_{\omega,n,M}\left(\frac{L}2\right)}2\right)^{\frac{n^2}4+n}}{10 \cdot N_n(n+1)(nF_n+1)} \cdot\delta.$$
and $\mu:=F_n \cdot\eps$.
Since $E > \frac25 \delta$, we have that $\eps \leq \frac E 4$.

Let $(c_j)_{j=0}^n$ be the coefficients of $p$. Put $l \leq m$ and $(e_v)_{v=1}^l$ -- uniquely determined! -- such that:
\begin{enumerate}[(i)]
\item $1 \leq e_1 < \ldots < e_l \leq m$;
\item for all $v \in \{1,\ldots,l\}$, $c_{k_{e_v}} \leq a_{e_v} + \mu$ or $c_{k_{e_v}} \geq b_{e_v} - \mu$;
\item for all $i \in \{1,\ldots,m\} \setminus \{e_1,\ldots,e_l\}$, $a_i + \mu < c_{k_i} < b_i - \mu$.
\end{enumerate}
Applying Theorem~\ref{alt-mess}, there is a finite sequence $(x_j)_{j=1}^{n+1-l} \subseteq [0,1]$ with $x_1<\ldots<x_{n+1-l}$ and there is a $\nu \in \{\pm1\}$ such that for all $i \in \{1,\ldots,n+1-l\}$,
$$|\nu(-1)^i(f(x_i)-p(x_i)) - E| \leq \eps \leq \frac E 4,$$
so
$$|\nu(-1)^i(f(x_i)-p(x_i)) - E| + \frac{3L}4 \leq \frac{E}4 + \frac{3E}4 = E.$$

\ \\[2mm]\noindent {\bf Claim.}  For each $i \in \{1,\ldots,n-l\}$,
$$|(p-f)(x_{i+1})-(p-f)(x_i)| \geq \frac L 2.$$

\noindent {\bf Proof of the claim:} Let $i \in \{1,\ldots,n-l\}$. Since
$$|\nu(-1)^i(f(x_i)-p(x_i)) - E| \leq E - \frac{3L}4,$$
we have that
$$-E + \frac{3L}4 \leq \nu(-1)^i(f(x_i)-p(x_i)) - E,$$
so
$$\nu(-1)^i(f(x_i)-p(x_i)) \geq \frac{3L}4 \geq 0.$$
Similarly,
$$\nu(-1)^{i+1}(f(x_{i+1})-p(x_{i+1})) \geq \frac{3L}4 \geq 0.$$
Therefore,
\begin{align*}
|(p-f)(x_{i+1})-(p-f)(x_i)| &= |\nu(-1)^i(f(x_i)-p(x_i)) + \nu(-1)^{i+1}(f(x_{i+1})-p(x_{i+1}))|\\
&= \nu(-1)^i(f(x_i)-p(x_i)) + \nu(-1)^{i+1}(f(x_{i+1})-p(x_{i+1}))\\
&\geq \frac{3L}4 + \frac{3L}4 = \frac{3L}2 \geq \frac L 2
\end{align*}
and we are done.\hfill $\blacksquare$\\[2mm]

Since $\|p\|\leq M$, $\chi_{\omega,n,M}$ is a modulus of uniform continuity for $p-f$, so for each $i \in \{1,\ldots,n-l\}$, $x_{i+1}-x_i \geq \chi_{\omega,n,M}\left(\frac{L}2\right) \geq \frac{\chi_{\omega,n,M}\left(\frac{L}2\right)}2$.

Let $Q_1$ be the polynomial obtained from $p_1-p_2$ by retaining only the terms of degrees $(k_{e_v})_{v=1}^l$ and set $Q_2:=p_1-p_2-Q_1$. It is enough to show that $\|Q_1\|\leq \frac\delta2$ and $\|Q_2\|\leq \frac\delta2$.

Let $v \in \{1,\ldots,l\}$. Then $c_{k_{e_v}} \leq a_{e_v} + \mu$ or $c_{k_{e_v}} \geq b_{e_v} - \mu$. Without loss of generality, assume $c_{k_{e_v}} \leq a_{e_v} + \mu$. (In the case where $c_{k_{e_v}} \geq b_{e_v} - \mu$, the corresponding proof of the upcoming claim mirrors the one given.) Put $c:= c_{k_{e_v}}$ and $c_1$, $c_2$, $c'$ be the $k_{e_v}$'th coefficients of $p_1$, $p_2$ and $Q_1$, respectively. Then $c=\frac{c_1+c_2}2$, $c'=c_1-c_2$, $c_1 \geq a_{e_v}$ and $c_2 \geq a_{e_v}$.

\ \\[2mm]\noindent {\bf Claim.}  We have that $|c'| \leq 2\mu$.

\noindent {\bf Proof of the claim:} Since
$$\frac{c_1+c_2}2 \leq a_{e_v} + \mu,$$
we have that
$$\frac{c_1-a_{e_v}}2 + \frac{c_2-a_{e_v}}2 \leq \mu,$$
so $c_1-a_{e_v}$ and $c_2-a_{e_v}$ are contained in the interval $[0,2\mu]$. From this we get $|c_1-c_2| \leq 2\mu$.\hfill $\blacksquare$\\[2mm]

Therefore all the coefficients of $Q_1$, which is of degree at most $n$ and has no constant term (since $0 < k_1$), are bounded in absolute value by $2\mu$, so
$$\|Q_1\| \leq n \cdot 2\mu = 2n\eps F_n \leq   \frac{2n}{10 \cdot (n F_n +1)} \cdot \delta \cdot F_n \leq \frac\delta2 .$$

Let $j \in \{1,\ldots,n+1-l\}$. Applying the argument in \cite[p. 316]{Koh08} with $q:=4\eps$, we get that $|p_1(x_j) - p_2(x_j)| \leq 4\eps$. Therefore
$$|Q_2(x_j)|\leq |p_1(x_j) - p_2(x_j)| + |Q_1(x_j)| \leq 4(nF_n+1)\eps =   \frac{4 \cdot\left(\frac{\chi_{\omega,n,M}\left(\frac{L}2\right)}2\right)^{\frac{n^2}4+n}}{10 \cdot N_n(n+1)} \cdot \delta \leq \frac{\left(\frac{\chi_{\omega,n,M}\left(\frac{L}2\right)}2\right)^{\frac{n^2}4+n}}{N_n(n+1)} \cdot \frac\delta2.$$

Since $\frac{\chi_{\omega,n,M}\left(\frac{L}2\right)}2 \leq 1$, we may apply Lemma~\ref{beta-lemma} to get that $\|Q_2\| \leq \frac\delta2$.
\end{proof}

\section{Further remarks}\label{sec:rem}

The modulus of uniqueness obtained above is in particular suited for the classical problem of Chebyshev approximation, so it is of course natural to ask how it compares with the one in \cite[Theorem 16.30]{Koh08}. The answer is that it is slightly more inefficient, but we must be careful to distinguish between inessential inefficiencies that we introduced in order to obtain a smoother exposition and those that derive from the fact that we treat the case of bounded coefficients. Among the former kind, one counts the factorial-like factors in the original modulus, whose removal makes our modulus slightly smaller than it could have been, but easier on the eyes. Among the latter kind, we mention the factor in the denominator that includes $F_n$, but most importantly the fact that the exponent of the uniform continuity modulus now includes a quadratic term which arises from the use of Proposition~\ref{s-lb1}, where one needs to bound the term $r(n-r)$ in a way that is uniform in $r$, while in the classical Chebyshev approximation where ordinary Lagrangian interpolation is used, one necessarily has $r=n$ and thus the corresponding term is $0$.

\begin{remark} \rm
It may seem surprising at a first glance that the modulus of uniqueness does not contain any dependence on the coefficient bounds except for the obvious one via $\|p_0\|$. Ulrich Kohlenbach has pointed out to the author, however, that the bounding of the norms of the $p_i$'s by $M$ (which occurs in the only case of the theorem that really matters, as clearly seen in the proof above) already restricts -- using Corollary~\ref{mark3} -- {\bf all} their coefficients to compact sets and therefore -- by the logical metatheorems in \cite[Chapter 15]{Koh08} -- no additional restrictions can contribute in any way to the final extracted quantity.
\end{remark}

\begin{remark} \rm
The modulus which we obtained depends, in addition to $\|p_0\|$, $n$ and $\delta$, on parameters specific to $f$, namely $\omega$, $L$ and $\|f\|$. One can completely eliminate the dependence on $\|f\|$ using the following trick (see \cite[p. 300]{Koh08}). By shifting the data (the $f$ and the $p_i$'s) by the constant $f(0)$ one remains within the framework given by the $K$ and the $p_0$, as the constant terms of the polynomials are not affected by the restrictions. Since the modulus of uniform continuity $\omega$ is retained, the modulus of uniqueness for this new case is then also valid for the original data, so one only has to find an upper bound for the norm of the new $f$ solely in terms of $\omega$. Let $x \in [0,1]$. Then there is an $r \leq \left\lfloor 2/{\omega(1)}\right\rfloor$ such that
$$r \cdot \frac{\omega(1)}2 \leq x < (r+1)\cdot \frac{\omega(1)}2$$
(an $r \in \N$ surely exists, and if one had $r \geq \left\lfloor 2/{\omega(1)}\right\rfloor + 1$, this would contradict $x \leq 1$). We have, then, that $0 \leq x - r \cdot \omega(1) / 2 < \omega(1)$ and since now $f(0)=0$,
$$|f(x)| = \left| f(x) - f(r \cdot \omega(1) / 2) + \sum_{i=0}^{r-1} (f((i+1)\cdot \omega(1) / 2) - f(i\cdot \omega(1) / 2)) \right| \leq 1 + r \leq \left\lfloor 2/{\omega(1)}\right\rfloor + 1.$$
\end{remark}

We notice that the modulus of uniqueness just obtained is linear in the variable $\delta$. This is connected to the property called `strong uniqueness' (or `strong unicity'), introduced in \cite[Section 3]{NewSha63}. If $X$ is a real normed space, $K \subseteq X$, $f \in X \setminus K$ and $y\in K$ such that $E:=\|f-y\| =\min_{q \in K} \|f-q\|>0$, then this property of strong uniqueness means that there is a $\gamma >0$ -- called the `constant of strong unicity' -- such that for all $p \in K$,
$$\|f-p\| \geq E+\gamma\|p-y\|.$$
Since the above can be written as 
$$\|p-y\| \leq \frac{\|f-p\| -E}\gamma,$$
it is equivalent to: for all $\delta \geq 0$,
$$\text{if }\frac{\|f-p\| -E}\gamma \leq \delta\text{, then }\|p-y\|\leq\delta,$$
or, more simply, to the fact that for all $\delta \geq 0$,
$$\text{if }\|f-p\| \leq E + \gamma\cdot\delta\text{, then }\|p-y\|\leq\delta.$$
But this last statement is provided by Theorem~\ref{modulus}, so our effective modulus of uniqueness immediately yields an effective constant of strong unicity. From a qualitative standpoint, this method may be said to provide us with an alternative route towards this strong uniqueness property, essentially different from the usual non-constructive one taken e.g. by \cite{ChaTay83,Xu91}.

If we do not care about linearity, the modulus obtained above may be improved by removing its dependence on a lower bound for $E$, as shown by the following proposition.

\begin{proposition}[{cf. \cite[Proposition 16.18]{Koh08}}]
Let $X$ be a real normed space, $K \subseteq X$ and $f \in X \setminus K$. Put
$$E:=\min_{q \in K} \|f-q\|.$$
Assume $E > 0$. Let $\Psi : [0,\infty) \times (0,\infty) \to (0, \infty)$ be such that (i) for all $\delta \geq 0$, all $L \in (0,E]$ and all $p_1$, $p_2 \in K$ such that for all $i \in \{1,2\}$, $\|f-p_i\| \leq E+\Psi(\delta,L)$, we have that $\|p_1-p_2\|\leq \delta$ and (ii) for all $L \in (0,E]$, $\Psi(0,L)=0$.

Put, for all $\delta > 0$, $\Psi^*(\delta):=\min\left(\frac\delta4, \Psi\left(\delta,\frac\delta4\right)\right)$ and $\Psi^*(0):=0$. Then, for all $\delta \geq 0$, and all $p_1$, $p_2 \in K$ such that for all $i \in \{1,2\}$, $\|f-p_i\| \leq E+\Psi^*(\delta)$, we have that $\|p_1-p_2\|\leq \delta$.
\end{proposition}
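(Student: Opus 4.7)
The plan is to dispatch the claim by a case distinction on the size of $\delta$ relative to the unknown quantity $E$, so that exactly one of the two clauses in the definition $\Psi^*(\delta) = \min(\delta/4, \Psi(\delta, \delta/4))$ carries each case. The cost of no longer knowing a lower bound $L$ on $E$ is paid by the extra $\delta/4$ term: when $\delta$ happens to be large compared to $E$, we abandon $\Psi$ entirely and obtain the conclusion directly from the triangle inequality.

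First I would handle the boundary case $\delta = 0$, which is slightly special because $\Psi^*(0) = 0$ is defined separately. Here the hypothesis $\|f - p_i\| \leq E$ combined with the definition of $E$ forces $\|f - p_i\| = E$ for $i \in \{1, 2\}$, so in particular $\|f - p_i\| \leq E + \Psi(0, L)$ for any $L \in (0, E]$ by property (ii). Applying (i) with this $L$ (e.g. $L := E$) yields $\|p_1 - p_2\| \leq 0$, as required.

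For the main case $\delta > 0$, I split according to whether $\delta/4 \leq E$ or $\delta/4 > E$. In the first subcase the value $L := \delta/4$ lies in $(0, E]$, so $\Psi(\delta, \delta/4)$ is a legitimate input to hypothesis (i); since $\Psi^*(\delta) \leq \Psi(\delta, \delta/4)$, the assumption $\|f - p_i\| \leq E + \Psi^*(\delta)$ implies $\|f - p_i\| \leq E + \Psi(\delta, \delta/4)$, and (i) gives $\|p_1 - p_2\| \leq \delta$ immediately. In the second subcase $E < \delta/4$, and using $\Psi^*(\delta) \leq \delta/4$ we get $\|f - p_i\| \leq E + \delta/4 < \delta/2$; the triangle inequality then yields $\|p_1 - p_2\| \leq \|f - p_1\| + \|f - p_2\| < \delta$.

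There is no real obstacle here: the argument is essentially the observation that the unknown $E$ can only fail to admit a usable lower bound when it is small, and in that regime the bound we want follows for free from $\|f - p_i\| \leq E + \delta/4$ and the triangle inequality. The mild care point is ensuring $L \in (0, E]$ when invoking (i), which is exactly the condition driving the case split.
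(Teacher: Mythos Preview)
Your proof is correct and follows essentially the same approach as the paper: a case split on whether $\delta/4 \leq E$ (where one invokes (i) with $L:=\delta/4$) or $\delta/4 > E$ (where the triangle inequality suffices), with the $\delta=0$ case handled separately. The only difference is cosmetic: the paper simply calls the $\delta=0$ case ``obvious'', while you spell out how (ii) and (i) combine there.
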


\begin{proof}
The case $\delta=0$ is obvious. If $E \geq \frac\delta4$, the conclusion is immediate, by taking $L:=\frac\delta4$. If $E< \frac\delta4$, then $\|f-p_1\| \leq E+\frac\delta4 \leq \frac\delta2$. Similarly, $\|f-p_2\|\leq\frac\delta2$, so $\|p_1-p_2\|\leq \delta$.
\end{proof}

In addition, the above moduli of uniqueness also give effective moduli of pointwise continuity and/or effective pointwise Lipschitz constants for the projection operator, as shown e.g. by \cite[Proposition 16.2]{Koh08}.

\section{Acknowledgements}

I would like to thank Ulrich Kohlenbach for his valuable remarks.

This work has been supported by the German Science Foundation (DFG Project KO 1737/6-1).

\end{document}